\documentclass[11pt, reqno]{amsart} 
\usepackage{subfiles} 

\usepackage{amsfonts,amsmath,amssymb,amscd}
\usepackage{amsthm}
\usepackage{latexsym}
\usepackage[dvipdfmx]{graphicx}
\usepackage{url}
\usepackage[normalem]{ulem}
\usepackage{psfrag}
\usepackage{color}
\usepackage{xcolor}
\usepackage{comment}
\usepackage[colorinlistoftodos]{todonotes}

\usepackage{thmtools} 
\usepackage{thm-restate} 

\theoremstyle{plain}
\everydisplay{\displaystyle}
\newtheorem{theorem}{Theorem}[section]
\newtheorem{lemma}[theorem]{Lemma}

\newtheorem{proposition}[theorem]{Proposition}

\theoremstyle{definition}
\newtheorem{definition}[theorem]{Definition}

\newtheorem{remark}[theorem]{Remark}

\theoremstyle{definition}

\newcommand{\Z}{\mathbb{Z}}

\newcommand{\Q}{\mathbb{Q}}

\newcommand{\GL}{GL}

\newcommand{\colvec}[1]{\begin{bmatrix}#1\end{bmatrix}}
\definecolor{myred}{rgb}{.8,.0,.0}
\definecolor{mygreen}{rgb}{.0,.6,.0}
\definecolor{mygray}{gray}{0.7}

\newif\ifshowred
\showredtrue 

\newif\ifreview
\reviewtrue 

\newcounter{reviewnote}

\newcounter{mystepcount}

\title[Stein fillings with relative trisection genus 2]{Stein fillings of planar contact 3-manifolds with relative trisection genus 2}
\author{Nobutaka Asano}
\address{National Institute of Technology, Tsuyama College, 624-1 Numa, Tsuyama-shi, Okayama, 708-8509, Japan}
\email{asano-n@tsuyama-ct.ac.jp}
\author{Natsuya Takahashi}
\address{Department of Mathematics, College of Science and Technology, Nihon University, 1-8-14, Kanda-Surugadai, Chiyoda-ku, Tokyo 101-8308, Japan}
\email{takahashi.natsuya@nihon-u.ac.jp}
\date{}
\begin{document}
\begin{abstract}
We study Stein fillings of planar contact $3$-manifolds under certain constraints on their relative trisections.
We partially classify the diffeomorphism types of such fillings admitting relative trisections with genus at most $2$.
\end{abstract}
\maketitle

\section{Introduction}

Gay and Kirby~\cite{GK16} introduced a trisection, which is a decomposition of a closed $4$-manifold into three $4$-dimensional $1$-handlebodies. 
The triple intersection of the three pieces is a closed surface, called the central surface of the trisection.
A trisection is a $4$-dimensional analogue of a Heegaard splitting of a $3$-manifold.
For a compact $4$-manifold with connected boundary, Gay and Kirby~\cite{GK16} also introduced a relative trisection, whose triple intersection is a compact surface with boundary.
A relative trisection canonically induces an open book decomposition on the boundary $3$-manifold.

The trisection genus is a nonnegative integer-valued invariant of a smooth $4$-manifold.
For a $4$-manifold $X$, the trisection genus $g(X)$ is defined as the minimal genus of the central surface among all (relative) trisections of $X$.
This invariant is a $4$-dimensional analogue of the Heegaard genus.
The classification of $4$-manifolds by trisection genus is a natural problem in trisection theory.
Meier and Zupan~\cite{MZ17} showed that every closed $4$-manifold of trisection genus at most $2$ is diffeomorphic to one of finitely many standard $4$-manifolds.

In this paper, we study the classification of $4$-manifolds with boundary by their relative trisection genus.
It is straightforward to see that $4$-manifolds with boundary whose relative trisection genus is $0$ are diffeomorphic to $4$-dimensional $1$-handlebodies.
However, the classification in relative trisection genus at least $1$ remains open.

For $4$-manifolds with boundary, the classification problem is also generally difficult.
We therefore restrict our attention to Stein surfaces, which form an important class of smooth $4$-manifolds with boundary in symplectic and contact topology. 
By Giroux correspondence, contact structures on closed $3$-manifolds correspond
to open book decompositions up to positive stabilization~\cite{G02}.
In this paper, we focus on Stein surfaces that induce contact $3$-manifolds admitting a planar open book decomposition (i.e., one with genus-$0$ pages). 
It was shown by Wendl~\cite{W10} that Stein fillings of planar contact
$3$-manifolds admit genus-$0$ positive allowable Lefschetz fibrations compatible with the supporting open book decomposition on the boundary.
Furthermore, we assume that the pages of these open book decompositions arise from relative trisections.
Under these assumptions, %
we show that any such Stein surface with relative trisection genus at most $1$ is diffeomorphic to a $4$-dimensional $1$-handlebody.

\begin{restatable}{theorem}{maingzeroone}\label{thm:g=01}
Let $X$ be a Stein filling of some contact $3$-manifold supported by a planar open book decomposition with $b$ binding components.
\begin{itemize}
\item $X$ admits a $(0,k;0,b)$-relative trisection if and only if $k=b-1$ and $X$ is diffeomorphic to $\natural^{b-1}(S^1 \times D^3)$.
\item $X$ admits a $(1,k;0,b)$-relative trisection if and only if $b\geq2$, $k=b-1$, and $X$ is diffeomorphic to $\natural^{b-2}(S^1 \times D^3)$.
\end{itemize}
\end{restatable}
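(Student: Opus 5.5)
The plan is to read off everything from the relative trisection diagram, using two standard facts about a $(g,k;p,b)$-relative trisection of $X$. First, the induced open book on $\partial X$ has page $\Sigma_{p,b}$. Second, $X$ is built from the $4$-dimensional $1$-handlebody $Z_k=\natural^{k}(S^1\times D^3)$ (with $k=2p+b-1+\ell$ for some $\ell\ge 0$) by attaching $2$-handles along curves coming from a relative trisection diagram. In our setting $p=0$, so the page is planar with $b$ boundary components. By Wendl's theorem the Stein filling $X$ is then a genus-$0$ positive allowable Lefschetz fibration compatible with this planar open book. Hence $\chi(X)=1-(b-1)+n$, where $n\ge 0$ is the number of vanishing cycles (critical points).

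For $g=0$: the central surface is $\Sigma_{0,b}$, and the relative trisection is the trivial one. There are no $\alpha,\beta,\gamma$ curves, so $X$ equals a single sector $Z_k$. Also $k=2p+b-1=b-1$, since $k\ge 2p+b-1$ and $k\le g+p+b-1$ from the standard inequalities. So $X\cong\natural^{b-1}(S^1\times D^3)$. For the converse, $\natural^{b-1}(S^1\times D^3)$ carries the genus-$0$ relative trisection whose page is $\Sigma_{0,b}$. It is the Stein filling of $\#^{b-1}(S^1\times S^2)$ with trivial monodromy, hence planar.

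For $g=1$: the central surface is $\Sigma_{1,b}$ and the constraints give $b-1\le k\le b$. The goal is to show $k=b$ is impossible and that $k=b-1$ forces $X\cong\natural^{b-2}(S^1\times D^3)$. I would compute the Euler characteristic in two ways. From the trisection, $\chi(X)=g-3k+3p+2b-1=2-3k+2b-1$ in the relative setting (I will double check the formula of Gay--Kirby / Castro). From the Lefschetz fibration, $\chi(X)=2-b+n$.
\begin{itemize}
\item If $k=b$, then $\chi=1-b$, which would need $n=-1$. This is a contradiction.
\item If $k=b-1$, then $\chi=4-b$, so $n=2$ vanishing cycles.
\end{itemize}
Then I would use the diagram more concretely. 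With $k=b-1$ there is exactly one curve in each of $\alpha,\beta,\gamma$ on $\Sigma_{1,b}$, each nonseparating. The Kirby diagram of $X$ from the relative diagram is $Z_{b-1}$ plus a single $2$-handle, after the $\alpha,\beta$ pair is standardized. So $X$ has one $2$-handle, and algebraically $\chi=1-(b-1)+1=3-b$. This conflicts with the count $\chi=4-b$ above, which tells me I must recheck the formula. I will instead trust the handle count: $b-1$ one-handles and one two-handle, and the two-handle curve is the $\gamma$ curve read in $\partial Z_{b-1}$.

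The main obstacle is showing that this $2$-handle cancels a $1$-handle, i.e.\ that its attaching circle passes geometrically once over some $1$-handle. First I would use the Stein/planar hypothesis. The monodromy of the induced open book is a product of positive Dehn twists on $\Sigma_{0,b}$, coming from the Lefschetz fibration. The monodromy computed from the trisection diagram (Castro--Gay--Pinz\'on-Caicedo algorithm) is a single twist-like map determined by one $\gamma$ curve. Comparing, the vanishing cycle must be a nonseparating-in-the-sense-of-allowable arc configuration, namely a curve in $\Sigma_{0,b}$ enclosing exactly two boundary components. A Lefschetz fibration whose one vanishing cycle encloses two boundaries gives handle cancellation, so $X\cong\natural^{b-2}(S^1\times D^3)$; in particular $b\ge2$. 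For the converse, I would exhibit an explicit genus-$1$ diagram: stabilize the genus-$0$ trisection of $\natural^{b-2}(S^1\times D^3)$ once, so that its open book has page $\Sigma_{0,b}$ with monodromy a positive twist around two boundary components. That open book is planar and supports the Stein filling $\natural^{b-2}(S^1\times D^3)$.
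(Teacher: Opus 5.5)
Your $g=0$ argument is fine, and it does not even need Wendl's theorem. The $g=1$ argument, however, has a genuine gap, and it also contains an arithmetic error.

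First, the Euler characteristic is miscomputed. For $g=1$, $p=0$ the Castro--Ozbagci formula gives $\chi(X)=1-3k+2b-1=2b-3k$, not $1-3k+2b$; you substituted $g=2$. With the correct value, $k=b$ forces $n=-2$. For $k=b-1$ you get $\chi(X)=3-b$, and comparing with $\chi(X)=2-b+n$ from Wendl's PALF gives $n=1$. This agrees with your handle count from the trisection diagram, so there is no conflict to work around. You should nevertheless state explicitly that the PALF has exactly one vanishing cycle, since everything afterwards depends on it.

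Second, and more seriously, identifying that vanishing cycle by ``comparing monodromies'' does not work, and the conclusion you reach is false.

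\begin{itemize}
\item The open book induced on $\partial X$ by the relative trisection is only known to have page $\Sigma_{0,b}$. It need not be the planar open book from which Wendl's PALF is built, or even support the same contact structure.
\item The CGPC monodromy is not simply a twist determined by the $\gamma$ curve.
\item So there is nothing legitimate to compare.
\end{itemize}

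Nor must the cycle enclose exactly two boundary components. Any homologically essential curve can occur. For example, a boundary-parallel curve on $\Sigma_{0,4}$ gives a PALF whose total space is a Stein filling of a planar contact manifold, and by CGPC it has a $(1,3;0,4)$-relative trisection. For $b=2$ the only essential curve is the core of the annulus, which does not enclose two boundary components.

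None of this is needed; the paper's proof uses the following instead.

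\begin{itemize}
\item Allowability of the single vanishing cycle $C$ already forces $b\ge 2$, since every curve on $\Sigma_{0,1}$ is null-homologous.
\item For \emph{every} essential $C\subset\Sigma_{0,b}$, the $2$-handle cancels a $1$-handle of $\Sigma_{0,b}\times D^2$. Take an arc crossing $C$ once between boundary components on opposite sides of $C$. Complete it, using arcs disjoint from $C$, to a system of $b-1$ arcs cutting $\Sigma_{0,b}$ into a disk. These arcs are the cocores of the $1$-handles, so $C$ runs exactly once over exactly one $1$-handle.
\item This gives $X\cong\natural^{b-2}(S^1\times D^3)$.
\end{itemize}

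For the converse, you would need a precise relative-stabilization result to justify your stabilization sketch. It is simpler to apply the CGPC theorem to the PALF $(\Sigma_{0,b};C)$, which directly yields a $(1,b-1;0,b)$-relative trisection of $\natural^{b-2}(S^1\times D^3)$.
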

Here a $(g,k;p,b)$-relative trisection means that the central surface has genus $g$, the sectors have genus $k$, and the induced boundary open book page has genus $p$ with $b$ boundary components.

The genus-$2$ case is more complicated. The following theorem gives a classification when the number of binding components of the open book decomposition is at most $4$.

\begin{restatable}{theorem}{maingtwo}\label{thm:g=2}
Let $X$ be a Stein filling of some contact $3$-manifold supported by a planar open book decomposition with $b$ binding components.
\begin{itemize}
\item If $X$ admits a $(2,k;0,b)$-relative trisection, then $b\geq 2$ and $k=b-1$.
\item $X$ admits a $(2,1;0,2)$-relative trisection if and only if $X$ is diffeomorphic to $E_{-2}$, which is the $D^2$-bundle over $S^2$ with Euler number~$-2$.
\item $X$ admits a $(2,2;0,3)$-relative trisection if and only if $X$ is diffeomorphic to either $E_{-2}\natural(S^1\times D^3)$ or $D^4$.
\item $X$ admits a $(2,3;0,4)$-relative trisection if and only if $X$ is diffeomorphic to $V_{p,q}$ for some coprime integers $p$ and $q$. Here, $V_{p,q}$ is the total space of the positive allowable Lefschetz fibration depicted in Figure~\ref{fig:V_pq_pillowcase_Sec1}.
\end{itemize}
\end{restatable}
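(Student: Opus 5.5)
The plan is to turn the trisection data into a genus-$0$ positive allowable Lefschetz fibration and then classify the possible vanishing-cycle words on the planar page.

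\textbf{Step 1: numerology.} For a $(g,k;p,b)$-relative trisection the page $P$ of the boundary open book satisfies $\chi(P)=2-2p-b$. It is obtained from the central surface $\Sigma$, of genus $g$ with $b$ boundary components, by surgering along the cut system of one sector; this is the Gay--Kirby/Castro construction. That surgery gives $k = g+p+b-1 - (\text{number of surgered curves})$. The curves surgered are the $g-p$ curves of $\alpha$ meeting the $\beta$ curves. With $p=0$ and $g=2$ this should force $k=b-1$, exactly as in the proof of Theorem~\ref{thm:g=01}.

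Next I would check that $b\ge 2$. If $b=1$ the page is a disk, so $X$ fills $S^3$ and Gromov--Eliashberg gives $X\cong D^4$. In that case the relation $k=b-1$ and the Euler characteristic $\chi(X)=1$ are incompatible with a genus-$2$ central surface. Indeed, $\chi(X)=g-3k+3p+2b-1$, which equals $-1+2b$ for $g=2$, $k=b-1$, $p=0$. This gives $\chi(X)=2b-1-3(b-1)+\dots$, and I would simply recompute it carefully to exclude $b=1$.

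\textbf{Step 2: from trisection to Lefschetz fibration.} By Wendl, $X$ admits a planar positive allowable Lefschetz fibration whose page is $P$, a sphere with $b$ holes. Castro--Ozbagci show how a relative trisection diagram is produced from a Lefschetz fibration, and the number of vanishing cycles $n$ controls $g$. I would use $g = p + n$ minus the page contributions, or equivalently $\chi(X)=1-(b-1)+n$, compared with the trisection formula for $\chi(X)$. For $g=2$ this gives a small fixed $n$, which I expect to be $n=b-1$ or $n=b-2$ depending on the computation.

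The monodromy is then a product of $n$ Dehn twists along homologically nontrivial curves on a sphere with $b\le4$ holes. Conversely, given any such fibration with the right $n$, the Castro--Ozbagci construction yields a genus-$2$ relative trisection, which handles the ``if'' directions.

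\textbf{Step 3: case analysis.}
\begin{itemize}
\item For $b=2$ the annulus has only the core curve, so one vanishing cycle gives $E_{-2}$.
\item For $b=3$, with two vanishing cycles on a pair of pants, each cycle is parallel to a boundary component. Two twists on the same boundary curve give $E_{-2}\natural(S^1\times D^3)$. Twists on distinct boundary curves give, after handle cancellation, $D^4$.
\item For $b=4$, with three vanishing cycles on the four-holed sphere, essential non-boundary curves correspond to slopes $p/q$ (the pillowcase picture). Using Hurwitz moves and handle slides, I would normalize to the configuration of Figure~\ref{fig:V_pq_pillowcase_Sec1}, so that $X\cong V_{p,q}$.
\end{itemize}

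\textbf{Main obstacle.} The hard step is $b=4$. There I must show that every admissible word of three twists, some along boundary-parallel curves and some along slope curves, is Hurwitz-equivalent or diffeomorphic to some $V_{p,q}$. I would try first to use the fact that any two essential curves of slopes $r,s$ on the four-holed sphere can be moved by a mapping class to slopes $0$ and $p/q$. Boundary-parallel cycles would then be absorbed by Kirby calculus into the form in the figure.
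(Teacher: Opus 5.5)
Your overall architecture matches the paper's: Wendl's PALF, an Euler characteristic comparison, enumeration of vanishing-cycle configurations on $\Sigma_{0,b}$, pillowcase slopes for $b=4$, and the Castro--Gay--Pinz\'on-Caicedo construction for the converse (Castro--Ozbagci only supply the formula for $\chi$). But the computation everything hinges on is never carried out, and the values you guess are wrong.

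First, $k=b-1$ does not follow from trisection combinatorics alone. The axioms only give $b-1\le k\le g+b-1$, and the number of compressing curves, which is always $g-p$, does not determine $k$. The upper bound comes from the fibration. Equating $\chi(X)=2-b+n$ (one $0$-handle, $b-1$ $1$-handles, $n$ $2$-handles) with $\chi(X)=g-3k+2b-1$ gives $n=g-3k+3b-3$. With $g=2$, the condition $n\ge 0$ forces $k\le b-1$, hence $k=b-1$. Substituting back gives $n=2$ for every $b$, not $b-1$ or $b-2$, and $\chi(X)=4-b$. For $b=1$ your Gromov--Eliashberg route then works, since $3\neq 1$. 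It is simpler, though, to note that two vanishing cycles on a disk are null-homologous, contradicting allowability.

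With the wrong count, the case analysis fails in two places.
\begin{itemize}
\item For $b=2$, a single $(-1)$-framed $2$-handle along the core of the annulus cancels the $1$-handle and gives $D^4$, not $E_{-2}$. $E_{-2}$ arises from two parallel copies of the core, which is exactly what $n=2$ demands.
\item For $b=4$, the statement you identify as the main obstacle is false. A word of three positive twists on $\Sigma_{0,4}$ gives a total space with $\chi=1$, whereas $\chi(V_{p,q})=0$, so it is never some $V_{p,q}$.
\end{itemize}
Your converse has the same problem. The Castro--Gay--Pinz\'on-Caicedo construction turns $n$ vanishing cycles on $\Sigma_{0,b}$ into an $(n,b-1;0,b)$-relative trisection, which has genus $2$ only when $n=2$.

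Once $n=2$ is in place, $b=4$ is easy. Suppose neither cycle is boundary-parallel. Diffeomorphisms of $\Sigma_{0,4}$ may permute boundary components, and applying one to both cycles does not change the total space. They act transitively on slopes, so you may take $C_1$ of slope $1/0$ and $C_2$ of some slope $p/q$, which is $V_{p,q}$ by definition. If some cycle is boundary-parallel, the total space is $E_{-2}\natural^2(S^1\times D^3)\cong V_{1,0}$ or $S^1\times D^3\cong V_{0,1}$, consistent with $\chi=0$. Your $b=3$ analysis is correct as written.
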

\begin{figure}[!htbp]
\centering
\includegraphics[scale=0.4]{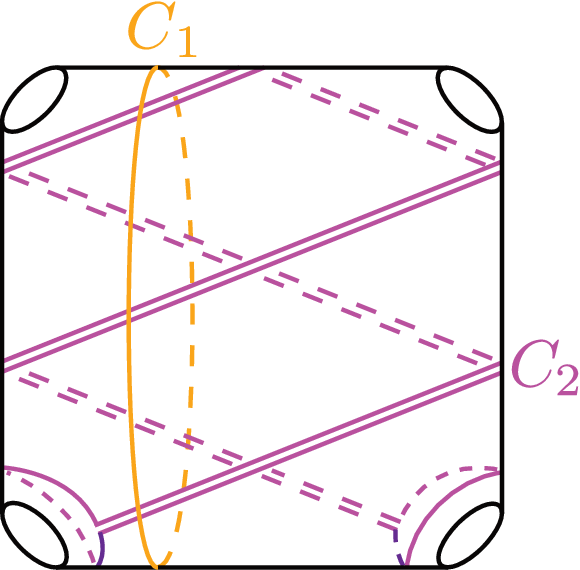}
\caption{A diagram of the positive allowable Lefschetz fibration $V_{p,q}$. $C_1$ and $C_2$ are vanishing cycles on the regular fiber $\Sigma_{0,4}$. The slope of $C_2$ is $p/q=2/5$ in this figure.}
\label{fig:V_pq_pillowcase_Sec1}
\end{figure}

The last case of Theorem~\ref{thm:g=2} gives rise to a family of Stein surfaces $V_{p,q}$ indexed by slopes $p/q$ on the four-punctured sphere. We normalize $q>0$ and $\gcd(p,q)=1$.
For details of $V_{p,q}$, see Section~\ref{sect:proofmain}.

We outline the proofs of Theorems~\ref{thm:g=01} and \ref{thm:g=2}.
By the result of Wendl~\cite{W10}, any Stein filling of a planar contact $3$-manifold admits a genus-$0$ positive allowable Lefschetz fibration that induces the supporting open book decomposition.
Using this theorem, we obtain a constraint on the number of vanishing cycles on the fiber surface.
Since the topological type of the regular fiber is fixed by our assumptions, the problem reduces to classifying ordered configurations of simple closed curves on the fiber surface.
We also show that the Stein surfaces $V_{p,q}$ cannot admit a relative trisection of genus less than $2$ if $q \neq 1$ (see Proposition~\ref{prop:reltrisection_genus}). 

The following proposition gives a sufficient condition for  $V_{p,q}$ and $V_{p',q'}$ to be diffeomorphic.

\begin{restatable}{proposition}{mainVpq}\label{prop:homeo_on_fiber}
Let $p,p',q, q'$ be integers satisfying $q, q'>0$ and $\gcd(p,q)=\gcd(p',q')=1$.
If $q=q'$ and either $p\equiv \pm p' \pmod{q}$ or $pp'\equiv \pm 1 \pmod{q}$, then $V_{p,q}$ is diffeomorphic to $V_{p',q'}$.
\end{restatable}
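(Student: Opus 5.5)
The idea is to realize each congruence condition by an explicit move that does not change the diffeomorphism type of the total space of a positive allowable Lefschetz fibration. Recall that $V_{p,q}$ is built from $D^2\times\Sigma_{0,4}$ by attaching two $2$-handles along $C_1$ and $C_2$, with the framing $-1$ relative to the page. Two kinds of moves preserve the diffeomorphism type of such a total space. First, applying a self-diffeomorphism $\phi$ of $\Sigma_{0,4}$ to all vanishing cycles at once; this need not fix the boundary componentwise and may even be orientation-reversing, as long as the framing convention is respected. Second, Hurwitz moves, i.e.\ replacing $(C_1,C_2)$ by $(C_2,t_{C_2}^{\pm1}(C_1))$ or $(t_{C_1}^{\pm1}(C_2),C_1)$. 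The plan is to use the pillowcase model of $\Sigma_{0,4}$, in which simple closed curves are classified by slopes in $\Q\cup\{1/0\}$. The mapping class group of the four-punctured sphere acts on slopes through $PSL(2,\Z)$, or through $PGL(2,\Z)$ once orientation reversal is allowed. In Figure~\ref{fig:V_pq_pillowcase_Sec1} I take $C_1$ to have slope $1/0$, or $0$, whichever the figure shows; I will write this as slope $1/0$.

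\textbf{Case $p'\equiv p\pmod q$.} Twisting along $C_1$, the slope-$1/0$ curve, acts on slopes by $p/q\mapsto (p+2q)/q$ or $(p+q)/q$, depending on normalization. To obtain a shift by exactly $q$ I will use the half-twist $\sigma$ exchanging two punctures, whose square is $t_{C_1}$ and which preserves $C_1$. It acts by the matrix $\begin{bmatrix}1&1\\0&1\end{bmatrix}$ and so sends $C_2$ of slope $p/q$ to slope $(p+q)/q$. Applying $\sigma^n$ to the whole configuration fixes $C_1$ and moves $C_2$ to slope $(p+nq)/q$, which gives $V_{p,q}\cong V_{p+nq,q}$.

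\textbf{Case $p'\equiv -p\pmod q$.} The orientation-reversing reflection of the pillowcase, given by $\mathrm{diag}(1,-1)$, fixes slope $1/0$ and sends $p/q$ to $-p/q$. An orientation-reversing diffeomorphism of the fiber does not directly preserve the positivity of the Lefschetz fibration. I will handle this as follows. Take the product of the reflection on $\Sigma_{0,4}$ with complex conjugation on the base $D^2$. This is orientation-preserving on the total space, and it carries positive Lefschetz singularities to positive ones. It also reverses the cyclic order of the vanishing cycles, which is harmless here because there are only two cycles and one Hurwitz move can reorder them. Therefore $V_{p,q}\cong V_{-p,q}$.

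\textbf{Case $pp'\equiv\pm1\pmod q$.} This is the main obstacle, since it requires replacing $C_1$ as well. First I use a Hurwitz move, or equivalently I simply exchange the roles of the two curves. Then I apply a mapping class $A\in SL(2,\Z)$ with $A(p/q)=1/0$, for instance
\[
A=\begin{bmatrix} a&b\\ -q&p\end{bmatrix},\qquad ap+bq=1 .
\]
This $A$ sends $1/0$ to $a/(-q)$, i.e.\ to slope $-a/q$, and $a\equiv p^{-1}\pmod q$. Hence $(C_1,C_2)$ with slopes $(1/0,\,p/q)$ is carried to the pair with slopes $(-p^{-1}/q,\,1/0)$. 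One Hurwitz move then swaps the order; it changes one curve by a twist, but the resulting slope differs only by a multiple of $q$ in the numerator, which the first case absorbs. This gives $V_{p,q}\cong V_{-p^{-1},q}$. Combining this with the first two cases yields every $p'$ with $pp'\equiv\pm1\pmod q$.

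The delicate checks are twofold. One must confirm that $A$ lifts to an actual mapping class of $\Sigma_{0,4}$; the lift may permute the boundary components, which is allowed. One must also confirm that the Hurwitz swap really only changes the slope modulo $q$. For the latter I would compute the effect of $t_{1/0}$ explicitly on the slope of the other curve.
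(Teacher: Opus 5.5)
Your proposal is correct and follows essentially the same route as the paper: realize the needed element of $\GL_2(\Z)$ acting on pillowcase slopes by a (possibly orientation-reversing, boundary-permuting) diffeomorphism of $\Sigma_{0,4}$, note that this does not change the total space, and then reorder the two vanishing cycles. The paper writes a single matrix per case, $\begin{bmatrix}1&m\\0&\epsilon\end{bmatrix}$ and $\begin{bmatrix}p'&m\\q&-p\end{bmatrix}$, instead of composing generators, and it handles orientation reversal via $-X(-\Sigma;C_1,C_2)\cong X(\Sigma;C_1,C_2)$ rather than your complex conjugation on the base; your Hurwitz-induced shift by a multiple of $q$ is harmless but unnecessary, since two vanishing cycles can be reordered by a global conjugation alone.
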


In particular, when $q$ is odd, the converse of Proposition~\ref{prop:homeo_on_fiber} holds. 

\begin{restatable}{theorem}{mainclass}\label{thm:classification}
Suppose that $q$ and $q'$ are odd. Then,
$V_{p,q}$ is diffeomorphic to $V_{p',q'}$
if and only if $q=q'$ and either $p\equiv \pm p' \pmod{q}$ or $pp'\equiv \pm 1 \pmod{q}$.
\end{restatable}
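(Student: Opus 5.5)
The forward direction ("if") is exactly Proposition~\ref{prop:homeo_on_fiber}. So the work is the converse, and the plan is to extract from $V_{p,q}$ a diffeomorphism invariant that recovers the two-bridge knot $K(p/q)$ up to mirror image. Schubert's classification then gives the stated arithmetic conditions. The invariant I would use is the fundamental group.

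\textbf{Step 1: presentation of $\pi_1(V_{p,q})$.} Since $V_{p,q}$ is the total space of a Lefschetz fibration over $D^2$ with fiber $\Sigma_{0,4}$ and vanishing cycles $C_1,C_2$, it is $\Sigma_{0,4}\times D^2$ with two $2$-handles attached along $C_1,C_2$ in fibers. Hence
\[
\pi_1(V_{p,q})\cong \pi_1(\Sigma_{0,4})/\langle\langle C_1,C_2\rangle\rangle ,
\]
where $\pi_1(\Sigma_{0,4})$ is free of rank $3$. In the pillowcase picture of Figure~\ref{fig:V_pq_pillowcase_Sec1}, $C_1$ has slope $0$ (or $\infty$, depending on the paper's convention) and $C_2$ has slope $p/q$.

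\textbf{Step 2: identification with a two-bridge group.} Regard $\Sigma_{0,4}$ as a sphere meeting a two-bridge knot or link in four points, splitting $(S^3,K)$ into two rational tangles $(B^3,T_0)$ and $(B^3,T_{p/q})$. For a rational tangle $T_r$, the complement $B^3\setminus T_r$ is a genus-$2$ handlebody. Its group is $\pi_1(\Sigma_{0,4})/\langle\langle c_r\rangle\rangle$, where $c_r$ is the curve of slope $r$ bounding the separating disk between the two arcs. Van Kampen then gives
\[
\pi_1(S^3\setminus K(p/q))\cong \pi_1(\Sigma_{0,4})/\langle\langle c_0,c_{p/q}\rangle\rangle\cong\pi_1(V_{p,q}).
\]
The main bookkeeping here is to match the slope convention of the figure with Schubert's normalization, so that the knot is precisely $K(p/q)$ (possibly after a fixed reparametrization $p/q\mapsto\pm p/q$ or $q/p$). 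A mismatch only permutes within the equivalence classes in the statement, so it is harmless.

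\textbf{Step 3: distinguishing the groups.} For $q$ odd, $K(p/q)$ is a knot, and it is the unknot exactly when $q=1$. From the group we recover the Alexander polynomial up to units, via Fox calculus on the two-generator, one-relator presentation. Hence we recover the determinant $|\Delta(-1)|=q$, so $q=q'$. To get $p$ we need that the knot group determines a two-bridge knot up to mirror image. Two-bridge knots are prime, and I would invoke the result that prime knots are determined by their groups up to mirror image (Gordon--Luecke together with Whitten). Alternatively, for two-bridge knots specifically, one can cite the direct classification through their groups. Then $K(p/q)$ and $K(p'/q)$ are equivalent or mirror images. Schubert's theorem says this happens if and only if $p\equiv \pm p'$ or $pp'\equiv\pm1 \pmod q$, where mirroring accounts for the signs.

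The main obstacle I expect is Step 2: making rigorous that the fiber curves in the pillowcase model coincide with the disk-bounding curves of the two rational tangles, with the correct slopes. Step 3 rests on deep but citable theorems. The oddness of $q$ enters only there. For even $q$, $K(p/q)$ is a two-component link, and link groups need not determine the link, which explains the hypothesis.
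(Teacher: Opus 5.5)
Your proposal is correct and follows the paper's proof. Both arguments identify $\pi_1(V_{p,q})$ with the two-bridge knot group, which the paper does in its fundamental-group proposition by citing Ohtsuki--Riley--Sakuma. Both then use primeness of two-bridge knots to pass from an isomorphism of groups to homeomorphic exteriors, and finish with Gordon--Luecke and Schubert's classification.

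Your determinant step for $q=q'$ is redundant, since Schubert's theorem already gives it. Your explicit appeal to Whitten for the step from group isomorphism to homeomorphic exteriors is more precise than the paper's citation of Gordon--Luecke alone.
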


\subsection*{Conventions}\label{sec:conventions}

We fix the following conventions and notation throughout the paper.
All manifolds are assumed to be compact, oriented, and smooth.
All diffeomorphisms are orientation-preserving unless otherwise stated.
All diffeomorphisms of surfaces are assumed to fix the boundary pointwise.
If there exists an orientation-preserving diffeomorphism between two manifolds $X$ and $Y$, then we write $X \cong Y$.
We also use the symbol $\cong$ to denote isomorphisms of groups.
For integers $g\geq0$ and $b\geq1$, $\Sigma_{g,b}$ denotes a compact, connected, oriented surface of genus $g$ with $b$ boundary components.
For a simple closed curve $c$ on a surface, $t_c$ denotes the right-handed Dehn twist along $c$.

\section{Preliminaries}
\subsection{Contact 3-manifolds and open book decompositions}

\begin{definition}
Let $L$ be a link in a closed $3$-manifold $M$, and let $\pi : M \setminus L \to S^1$ be a locally trivial fiber bundle.
The pair $(L, \pi)$ is called an \textit{open book decomposition} of $M$ if for each $\theta \in S^1$, the fiber $\pi^{-1}(\theta)$ is the interior of a compact surface with boundary $\partial \overline{\pi^{-1}(\theta)} = L$.
Moreover, on a tubular neighborhood $\nu(L)\cong L\times D^2$, the map $\pi$ agrees with the angular coordinate on $D^2$.
The link $L$ is called the \textit{binding}, and the fibers $\pi^{-1}(\theta)$ are called the \textit{pages}.
\end{definition}

Let $\Sigma$ be a compact oriented surface with boundary and $\phi$ a self-diffeomorphism of $\Sigma$.
One can reconstruct a closed $3$-manifold together with an open book decomposition from the pair $(\Sigma, \phi)$.

\begin{definition}
Let $\Sigma$ be a compact oriented surface with boundary.
%
%
Let $\phi:\Sigma\to\Sigma$ be an orientation-preserving diffeomorphism, called the \textit{monodromy}, such that $\phi$ is the identity in a neighborhood of $\partial{\Sigma}$.
Then the pair $(\Sigma,\phi)$ is called an \textit{abstract open book decomposition}.
\end{definition}

The next remark explains how $(\Sigma,\phi)$ determines a closed $3$-manifold $M_\phi$ together with an open book decomposition with page $\Sigma$. 

\begin{remark}
From an abstract open book decomposition, we can construct a closed $3$-manifold and its open book decomposition.
Let $(\Sigma,\phi)$ be an abstract open book decomposition whose boundary has $n$ components: $\partial{\Sigma}=l_1 \sqcup l_2 \sqcup \cdots \sqcup l_n$.
Then we obtain the mapping torus $\Sigma_\phi := \Sigma \times [0, 1]/ \sim$, where the equivalence relation $\sim$ is given by $(\phi(x), 0) \sim (x, 1)$ for all $x \in \Sigma$.
The closed $3$-manifold $M_\phi$ is then defined as follows:
\begin{equation*}
M_\phi := {\Sigma_\phi}\ {\cup_\psi} \left( \coprod_{n} S^1 \times D^2 \right).
\end{equation*}
The map $\psi$ that attaches the $n$ solid tori $S^1\times D^2$ to $\Sigma_\phi$ is defined as the union of maps $\psi_1, \psi_2, \ldots, \psi_n$.
For each boundary component $l_i$ of $\partial{\Sigma}$, the attaching map $\psi_{i}:\partial(S^1 \times D^2)\to{l_i\times[0, 1]/\sim} \subset \Sigma_\phi$ is a diffeomorphism (unique up to isotopy) satisfying the meridian $\{p\} \times \partial D^2$ is mapped to $\{p'\}\times[0, 1]/\sim$, where $\{p \} \times \partial D^2 \subset S^1 \times \partial D^2$ and $\{p'\} \times ([0,1]/\sim)\ \subset l_i \times ([0,1]/\sim)$.
Let $L$ be the union of the cores $S^1\times \{0\}$ of the $n$ solid tori $S^1\times D^2$ attached to $\Sigma_\phi$
and $\pi : \Sigma_{\phi}\to S^1$ be the following map
\begin{equation*}
\pi : \Sigma_\phi = {\Sigma\times[0, 1]/\sim}\ \to \ [0, 1]/\sim\ \cong\  S^1, \ \text{where } \pi([(x,t)])=t.
\end{equation*}
Let $\tilde{\pi} : M_{\phi}\setminus L\to S^1$ be the natural extension of the map $\pi$.
Then the pair $(L,\tilde{\pi})$ is an open book decomposition of $M_{\phi}$.
\end{remark}
In this paper, we identify an open book decomposition $(L, \tilde{\pi})$ with the corresponding abstract open book decomposition $(\Sigma, \phi)$.
Open book decompositions are deeply related to $3$-dimensional contact geometry.
We next recall the definition of a contact structure. 

\begin{definition}
Let $M$ be a closed, oriented $3$-manifold.
A hyperplane field $\xi$ on $M$ is called a \textit{contact structure} if there exists a $1$-form $\alpha$ on $M$ such that $\xi = \ker(\alpha)$ and $\alpha_p \wedge (d_p\alpha) > 0$ for every $p\in M$.
The pair $(M, \xi)$ is called a \textit{contact $3$-manifold}.
\end{definition}

A contact structure $\xi$ on a closed $3$-manifold $M$ is said to be \textit{supported} by an open book decomposition $(L,\pi)$ of $M$ if $\xi = \mathrm{ker}\alpha$ for some contact form $\alpha \in \Omega^1(M)$ such that $\alpha$ is positive on the binding and $d\alpha$ restricts to a positive area form on each page. 
The following one-to-one correspondence is well known.
\begin{equation*}
  \frac{\{ \text{contact structures on}\ M \}}{\text{isotopy}}
  \overset{1:1}\longleftrightarrow
  \frac{\{ \text{open book decompositions of}\ M \}}{\text{positive stabilization}}.
\end{equation*}
This is called the Giroux correspondence \cite{G02}.
In this paper, a particularly important class is that of planar open books, since many fillability and classification results become sharper in the planar setting.

\begin{definition}
An open book $(\Sigma,\phi)$ is called \textit{planar} if the page $\Sigma$ is diffeomorphic to a punctured sphere $\Sigma_{0,b}$.
A contact $3$-manifold $(M,\xi)$ is called \textit{planar} if $(M,\xi)$ is supported by a planar open book decomposition.
\end{definition}
We say that a contact structure $\xi$ on a $3$-manifold $M$ is \textit{Stein fillable} if there exists a Stein surface $X$ with $\partial X=M$ such that the induced contact structure on $\partial X$ is contactomorphic to $\xi$. 
Such a Stein surface $X$ is called a \textit{Stein filling} of $(M,\xi)$.
\subsection{Lefschetz fibrations}
\label{subsec:Lefschetz fibrations}

We now recall the basic definitions of Lefschetz fibrations and the associated handle decompositions.
\begin{definition}
Let $X$ be a compact, oriented, smooth $4$-manifold with boundary.
A smooth map $f:X \to D^2$ is called a \textit{positive Lefschetz fibration} over $D^2$ if it satisfies the following conditions:
\begin{itemize}
\item
There exist finitely many points $b_1, b_2, \ldots, b_n$ in the interior of $D^2$ such that the restriction map
\begin{equation*}
f : {f^{-1}(D^2 \setminus \{ b_1, b_2, \ldots, b_n \})} \to {D^2\setminus\{ b_1, b_2, \ldots, b_n \}}
\end{equation*}
 is a fiber bundle over $D^2\setminus\{ b_1, b_2, \ldots, b_n \}$ with fiber diffeomorphic to an oriented surface.
\item
The points $b_1, b_2, \ldots, b_n$ are the critical values of $f$, and for each $i\in\{1,2,\ldots,n\}$, the fiber $f^{-1}(b_i)$ contains a unique critical point $p_i$.
\item
For each critical value $b_i$ and corresponding critical point $p_i$, there are local complex coordinate charts, compatible with the orientations of $X$ and $D^2$ such that $f$ is locally written as $f (z_1, z_2) = z_1^2 + z_2^2$.
%
%
\end{itemize}
A fiber $f^{-1}(b)$ is called \textit{singular} if $b\in\{b_1, b_2, \ldots, b_n\}$, and is called \textit{regular} otherwise.
\end{definition}

A positive Lefschetz fibration over $D^2$ yields a handle decomposition of the total space.
Let $f : X \to D^2$ be a positive Lefschetz fibration whose regular fibers are diffeomorphic to the surface $\Sigma_{g,b}$. 
Then $X$ admits a handle decomposition:
\[
\begin{aligned}
X &= (D^2 \times \Sigma_{g,b}) \cup \left( \bigcup_{i=1}^{n} h^{(2)}_i \right) \\[3mm]
  &= h^{(0)} \cup \left( \bigcup_{j=1}^{2g+b-1} h^{(1)}_j \right)
    \cup \left( \bigcup_{i=1}^{n} h^{(2)}_i \right),
\end{aligned}
\]
where each $h^{(k)}_l$ denotes the $l$-th $k$-handle.
Each $2$-handle $h^{(2)}_i$ corresponds to the critical point $p_i$ and is attached along a simple closed curve $\alpha_i \subset \{\mathrm{pt}.\} \times \Sigma_{g,b} \subset D^2 \times \Sigma_{g,b}$.

The $2$-handle $h_i^{(2)}$ is attached along $\alpha_i$ with framing $-1$ relative to the surface framing.
The attaching circle $\alpha_i$ of $h^{(2)}_i$ is called a \textit{vanishing cycle}. 
The ordered collection $(t_{\alpha_1}, t_{\alpha_2}, \ldots, t_{\alpha_n})$ of right-handed Dehn twists is called a \textit{monodromy factorization} of the positive Lefschetz fibration $f$.
Conversely, for any positive Lefschetz fibration $f: X \to D^2$, there exists an ordered set of simple closed curves $(\alpha_1, \alpha_2, \ldots, \alpha_n)$ on a surface $\Sigma$ such that $f$ is obtained from this ordered set.
Let $X(\Sigma; \alpha_1, \alpha_2, \ldots, \alpha_n)$ denote the total space of the Lefschetz fibration determined by $(\Sigma; \alpha_1, \alpha_2, \ldots, \alpha_n)$.

\begin{definition}
A positive Lefschetz fibration $f: X \to D^2$ is said to be \textit{allowable} if all of its vanishing cycles $\alpha_i$ are homologically nontrivial in the fiber $\Sigma_{g,b}$.
For brevity, we call a positive allowable Lefschetz fibration a \textit{PALF}.
\end{definition}

Loi and Piergallini~\cite{LP01} and Akbulut and Ozbagci~\cite{AO01} showed that every Stein surface admits a PALF $f : X \to D^2$ with regular fiber $\Sigma$.
In particular, the contact structure induced on $\partial X$ is supported by the open book decomposition induced by $f$. 
The page of this open book is diffeomorphic to $\Sigma$, and its monodromy admits a positive factorization into Dehn twists along the vanishing cycles of $f$. 
We use the following theorem to prove the main results of this paper.

\begin{theorem}[Wendl~\cite{W10}]\label{thm-Wendl}
Let $(M, \xi)$ be a contact $3$-manifold supported by a planar open book $(\Sigma_{0,b}, \phi)$.
Let $X$ be a Stein filling of $(M,\xi)$.
Then, $X$ is diffeomorphic to the total space of a PALF obtained from $(\Sigma_{0,b}, \phi)$.
\end{theorem}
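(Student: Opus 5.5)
The plan is Wendl's holomorphic curve argument: build a foliation of the completed filling by $J$-holomorphic planar curves, and read off the Lefschetz fibration from that foliation. We first replace the Stein filling $X$ by a symplectic model with cylindrical end. Using Giroux's correspondence, we arrange a contact form $\alpha$ on $M=\partial X$ adapted to the planar open book $(\Sigma_{0,b},\phi)$; its Reeb flow is tangent to the binding, and the binding components are nondegenerate elliptic orbits. We then attach the positive half-symplectization $[0,\infty)\times M$ to $X$ to obtain the completion $\widehat X$. We choose an almost complex structure $J$ that is compatible with the symplectic form, cylindrical on the end, and adapted to the open book. Such a $J$ can be built so that every page $\pi^{-1}(\theta)$ lifts to an embedded $J$-holomorphic curve in $\mathbb{R}\times M$: a genus-$0$ curve with $b$ positive punctures, asymptotic to the binding orbits. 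Together with their $\mathbb{R}$-translates, these curves foliate the complement of $\mathbb{R}\times L$.

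Next we study the moduli space $\mathcal{M}$ of $J$-holomorphic curves in $\widehat X$ homotopic to these page curves. In dimension $4$, the page curves are embedded, have genus $0$, and have Fredholm index $2$. Wendl's automatic transversality criterion then makes every such curve regular without perturbing $J$, so the component of $\mathcal{M}$ containing curves near the end is a smooth $2$-manifold. Siefring's intersection theory for punctured curves applies here: the relevant self-intersection numbers vanish, and asymptotic winding bounds hold at the elliptic binding orbits. From these, distinct curves in $\mathcal{M}$ are disjoint and each curve is embedded, so the curves locally foliate $\widehat X$.

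The main obstacle is compactness. By SFT compactness, a sequence in $\mathcal{M}$ converges to a holomorphic building, and we must show the only possible degenerations are nodal curves with exactly one node, each component being a curve of the same type. Two facts control this. Because $X$ is Stein, the symplectic form is exact, so there are no closed $J$-holomorphic spheres and no buildings with multi-level breaking into the end. Index and intersection arguments then rule out all degenerations except a single transverse node splitting a page into two planar pieces with positive punctures, each of index $0$ or $1$. Granting this, the compactified moduli space $\overline{\mathcal{M}}$ is a closed disk, and the map $\widehat X\to\overline{\mathcal{M}}\cong D^2$ sending a point to the curve through it is a Lefschetz fibration. Its nodal fibers provide the critical points, and positivity of intersections makes each local model $z_1^2+z_2^2$ positive. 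Truncating at a finite level of the end yields a PALF on a manifold diffeomorphic to $X$.

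Finally we check allowability and the boundary behaviour. Each vanishing cycle separates the page into the two components of a nodal curve. Both components carry punctures, since there are no closed components, so each vanishing cycle separates the boundary components nontrivially and is homologically nontrivial in $\Sigma_{0,b}$. Near the boundary, the fibers are exactly the page curves in the cylindrical end, so the induced open book on $\partial X$ is $(\Sigma_{0,b},\phi)$ and the monodromy $\phi$ is the product of the Dehn twists along the vanishing cycles. This gives the required PALF obtained from $(\Sigma_{0,b},\phi)$.
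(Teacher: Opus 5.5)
The paper does not prove this statement; it imports it from Wendl~\cite{W10}. Your outline follows Wendl's holomorphic-curve strategy, and the architecture is right. The gap is in the compactness step, which you correctly single out as the crux.

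\emph{Exactness does not exclude breaking.} Exactness of the Stein structure only shows that $\widehat X$ contains no nonconstant closed $J$-holomorphic curves. It gives no control over SFT breaking along Reeb orbits: the lower level of a building consists of curves with positive punctures, and exactness allows those. Indeed, Wendl's compactness result holds for arbitrary strong fillings, where the only new phenomenon is exceptional-sphere components, i.e.\ blow-ups.

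\emph{What actually excludes it.} The mechanism is intersection theory with the $\mathbb{R}$-invariant family of holomorphic pages in the end. Each $u\in\mathcal{M}$ has vanishing Siefring intersection number with the page curves, and this persists for the components of any limit building. The page lifts foliate $\mathbb{R}\times(M\setminus L)$, so positivity of intersections forces every component in a symplectization level to be a page or a trivial cylinder over a binding orbit. A page uses all $b$ binding orbits as positive ends and has no negative ends, and a level made only of trivial cylinders is unstable. Hence a limit is either a page at infinity (these form $\partial\overline{\mathcal{M}}$) or a possibly nodal curve in $\widehat X$. Your sketch replaces this argument by an incorrect appeal to exactness and then ``grants'' the rest, which is most of the theorem.

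The nodal analysis also needs repair. With $N$ nodes, the component indices sum to $\mathrm{ind}(u)-2N=2-2N$. If every component has nonnegative index, this forces $N\le 1$. For $N=1$ both components then have index exactly $0$, not ``$0$ or $1$'': all indices are even here, because the binding orbits are elliptic with odd Conley--Zehnder index.

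Nonnegativity of the indices is not automatic. Wendl's automatic transversality criterion says nothing about genus-$0$ curves of index $-2$. You therefore need an extra input, such as choosing $J$ generic on the compact part $X$. This is legitimate because the components are somewhere injective, their punctures being asymptotic to distinct simply covered binding orbits.

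Finally, a priori $\overline{\mathcal{M}}$ is only a compact surface with one boundary circle. That it is a disk needs an argument, for example from the asymptotic behaviour of the curves at a binding orbit. Equivalently, you must show the region near the punctures is a union of solid tori around the binding, so that the boundary carries the given open book $(\Sigma_{0,b},\phi)$ rather than a more general structure.
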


\begin{remark}
 By a PALF obtained from $(\Sigma_{0,b}, \phi)$, we mean a positive allowable Lefschetz fibration whose regular fiber is $\Sigma_{0,b}$ and whose vanishing cycles realize a factorization of the monodromy $\phi$ into right-handed Dehn twists.
 Such a factorization is not unique in general, and hence the resulting PALF is not necessarily unique either.
\end{remark}


\subsection{Relative trisections}

A trisection is a decomposition of a closed $4$-manifold $X$ into three pieces: $X = X_1 \cup X_2 \cup X_3$, where each $X_i$ is a $4$-dimensional $1$-handlebody.
The pairwise intersections $X_i \cap X_j$ are $3$-dimensional $1$-handlebodies, and the triple intersection $X_1 \cap X_2 \cap X_3$ is a surface.

We are particularly interested in the corresponding notion for $4$-manifolds with boundary, known as relative trisections.
Let $X$ be a smooth, compact, connected, and oriented $4$-manifold with connected boundary, and let $g$, $k$, $p$ be non-negative integers and $b$ a positive integer.
Suppose that $X$ admits a $(g, k; p, b)$-{relative trisection} $X = X_1 \cup X_2 \cup X_3$. Then the following conditions hold:
\begin{itemize}
\item The parameters $g, k, p$ and $b$ satisfy the inequality $2p + b - 1 \leq k \leq g + p + b - 1$.
\item Each $X_i$ is diffeomorphic to a $4$-dimensional $1$-handlebody of genus $k$.
\item Each double intersection $X_i \cap X_j$ (equivalently, $\partial X_i \cap \partial X_j$) is a compression body from $\Sigma_{g,b}$ to $\Sigma_{p,b}$. 
\item The triple intersection $X_1 \cap X_2 \cap X_3$ is diffeomorphic to the surface $\Sigma_{g,b}$.
\item The boundary $\partial X$ admits an open book decomposition with page $\Sigma_{p,b}$.
\end{itemize}

For detailed definitions of relative trisections, we refer the reader to \cite{CGPC18}.

\begin{theorem}[Gay--Kirby~\cite{GK16}]
For any $4$-manifold $X$ with connected boundary and any open book decomposition of $\partial{X}$, there exists a relative trisection of $X$ that induces the given open book decomposition.
\end{theorem}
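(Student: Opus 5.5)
Since this is the existence theorem of Gay and Kirby, I would follow their Morse $2$-function strategy rather than a handle-by-handle construction. Write the given open book on $\partial X$ as $(L,\pi)$ with page $\Sigma_{p,b}$. The goal is a smooth map $F\colon X\to D^2$ whose restriction to $\partial X$ has the following form. On $\partial X\setminus\nu(L)$ it is $\pi$ followed by the inclusion $S^1\hookrightarrow\partial D^2$, pushed radially into a collar. On $\nu(L)\cong L\times D^2$ it is the projection to the $D^2$ factor. Moreover, $F$ should be a fibration with fiber $\Sigma_{p,b}$ over a neighbourhood of $\partial D^2$.

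\textbf{Extending the boundary data.} First I define $F$ on a collar $\partial X\times[0,\epsilon]$ by the formula above, so that it is a submersion there. I then extend it arbitrarily to $X$ and perturb rel the collar to a generic map. By the standard theory of generic maps from $4$-manifolds to surfaces, the critical set then consists of folds with isolated cusps. Using Levine/Saeki-type moves, together with the connectedness of $X$ and of the pages, I would eliminate all definite folds. This yields an indefinite Morse $2$-function with connected fibers that restricts to the open book on the boundary.

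\textbf{Simplifying the critical image.} Next I would homotope $F$, rel a neighbourhood of $\partial X$, so that its critical image takes the standard ``trisected'' shape. The target shape is a collection of immersed circles around a central disk over which the fiber is $\Sigma_{g,b}$. Each circle is made of indefinite folds, with all cusps pointing inward, and the fiber genus drops by one each time a fold arc is crossed outward, until it reaches $p$ near $\partial D^2$. The tools are the moves of Williams and Gay--Kirby: cusp merges, swallowtail births, flips, and the removal of fold crossings. I would then divide $D^2$ into three sectors meeting at the center. The cusps are to be distributed so that the preimage of each sector is built from $\Sigma_{g,b}\times D^2$ by attaching $4$-dimensional $1$-handles only, hence is $\natural^{k}(S^1\times D^3)$. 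Likewise the preimage of each ray between sectors should be a compression body from $\Sigma_{g,b}$ to $\Sigma_{p,b}$.

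\textbf{Balancing and the main obstacle.} Finally, the three sectors have to be made to have the same genus $k$. I would do this by adding cusp pairs (relative stabilizations, i.e.\ swallowtails), which raises $g$ and the $k$ of one sector without changing $X$ or the boundary open book. The main obstacle is the previous step: controlling the global combinatorics of the fold and cusp configuration, so that every fold crossing and every outward-pointing cusp can be removed while keeping the fibers connected and not disturbing the boundary behaviour. My first attempt would be to work one fold circle at a time from the outside in. At each stage I would use flips and swallowtails to turn every cusp inward and then slide crossings off toward the center, as Gay and Kirby do.
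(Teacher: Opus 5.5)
The paper does not prove this statement; it quotes it from Gay--Kirby, and their argument does go through indefinite Morse $2$-functions, so your overall route is the appropriate one. As a proof, however, your proposal stops exactly where the content of the theorem lies. You name the key step as the ``main obstacle'' and outline only a first attempt. That step is to homotope $F$ rel $\partial X$ into trisected position, and then to show that in that position the sector preimages are $1$-handlebodies and the ray preimages are compression bodies. Nothing in the proposal shows that the fold crossings and unwanted cusps can all be removed while keeping fibers connected. What you have is a roadmap to Gay--Kirby's argument, not the argument itself.

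The target you set for that step is also wrong, and this hides the mechanism that makes it work. Since $\Sigma_{g,b}\times D^2\cong\natural^{2g+b-1}(S^1\times D^3)$, attaching only $1$-handles to it gives $\natural^{k}(S^1\times D^3)$ with $k\ge 2g+b-1$. A relative trisection needs $k\le g+p+b-1$, which is smaller as soon as $p<g$. Indeed, crossing a fold outward lowers the genus, and in dimension four that is a $2$-handle, not a $1$-handle.

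The correct count uses the angular coordinate $\theta\circ F$. Suppose the rays avoid cusps and cross each fold arc transversely once, every cusp in a sector has radial axis, and no fold arc is tangent to a radial line. Then $\theta\circ F$ has exactly one critical point in the sector at each cusp, and it has index $2$. So the preimage of a sector is $C\times I\cong\natural^{g+p+b-1}(S^1\times D^3)$, where $C$ is the compression body over a ray, with one $2$-handle added per cusp. One must then show that these $2$-handles cancel $1$-handles, which gives $k_i=g+p+b-1-c_i$, where $c_i$ is the number of cusps in sector $i$. Thus the cusps are what lower $k$, and balancing means arranging $c_1=c_2=c_3$.

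Two smaller corrections:
\begin{enumerate}
\item At a cusp the higher-genus fiber lies between the two branches. So with genus increasing toward the center, the cusps must point outward, not inward.
\item A swallowtail does not change the fiber over the center, so it is not a stabilization. The move that raises $g$ and one $k_i$ by one is the birth of a ``lips'' around the center, with its two cusps placed in the other two sectors.
\end{enumerate}
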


The genus of a relative trisection $X=X_1\cup X_2\cup X_3$ is the genus of the triple intersection surface $X_1\cap X_2\cap X_3$.
The trisection genus $g(X)$ of a $4$-manifold $X$ is defined as follows:
\begin{equation*}
g(X) := \min\{g\in\Z_{\geq0} \mid \text{$X$ admits a $(g,k;p,b)$-relative trisection}\}.
\end{equation*}

The trisection genus is a diffeomorphism invariant of smooth $4$-manifolds.
Closed $4$-manifolds with trisection genus $2$ have been classified~\cite{MZ17}.

\begin{proposition}[Castro--Ozbagci~{\cite{CO19}}]\label{prop:Euler-rel}
If a $4$-manifold $X$ with boundary admits a $(g,k;p,b)$-relative trisection, then $\chi(X)=g-3k+3p+2b-1$.
\end{proposition}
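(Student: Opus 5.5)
The plan is to compute $\chi(X)$ directly from the decomposition $X=X_1\cup X_2\cup X_3$ using additivity of the Euler characteristic (inclusion--exclusion), together with the Euler characteristics of the standard pieces listed in the definition of a $(g,k;p,b)$-relative trisection.

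First, I would justify inclusion--exclusion. The sectors $X_i$ are compact codimension-$0$ submanifolds meeting along parts of their boundaries, so they can be triangulated as subcomplexes of a triangulation of $X$. Alternatively, one can thicken them slightly to open sets that deformation retract onto the pieces. Mayer--Vietoris applied twice then gives
\[
\chi(X)=\sum_{i}\chi(X_i)-\sum_{i<j}\chi(X_i\cap X_j)+\chi(X_1\cap X_2\cap X_3).
\]
This uses the identity $(X_1\cup X_2)\cap X_3=(X_1\cap X_3)\cup(X_2\cap X_3)$, whose intersection is the triple intersection $\Sigma_{g,b}$.

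Second, I would compute the individual terms.
\begin{itemize}
\item Each $X_i\cong\natural^k(S^1\times D^3)$ is homotopy equivalent to a wedge of $k$ circles, so $\chi(X_i)=1-k$.
\item Each $X_i\cap X_j$ is a compression body from $\Sigma_{g,b}$ to $\Sigma_{p,b}$. It is obtained from $\Sigma_{g,b}\times[0,1]$ by attaching $g-p$ three-dimensional $2$-handles. Equivalently, it is homotopy equivalent to $\Sigma_{p,b}$ wedge $g-p$ circles. Either description gives $\chi=(2-2g-b)+(g-p)=2-g-p-b$.
\item The triple intersection is $\Sigma_{g,b}$, with $\chi=2-2g-b$.
\end{itemize}

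Finally, substituting these values gives
\[
3(1-k)-3(2-g-p-b)+(2-2g-b)=g-3k+3p+2b-1,
\]
as claimed. The only step needing any care is the Euler characteristic of the compression body. This is the standard fact that its handle structure consists of $g-p$ two-handles attached to a thickened $\Sigma_{g,b}$ on one side, and it can be cross-checked using the constraint $2p+b-1\le k\le g+p+b-1$, which is consistent with these handle counts. The rest of the argument is routine bookkeeping.
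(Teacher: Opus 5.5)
Your argument is correct: inclusion--exclusion over the three sectors ($\chi=1-k$ each), the three compression bodies ($\chi=2-g-p-b$ each), and the central surface ($\chi=2-2g-b$) gives $3(1-k)-3(2-g-p-b)+(2-2g-b)=g-3k+3p+2b-1$. For the compression bodies, a cleaner cross-check than the inequality on $k$ is that, since $b\ge 1$, each is a $3$-dimensional handlebody of genus $g+p+b-1$.

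The paper gives no proof of this formula and simply quotes it from Castro--Ozbagci. Your Mayer--Vietoris count is the standard argument for it.
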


A relative trisection can be described combinatorially by three families of curves on a surface.

\begin{definition}
Let $X=X_1\cup X_2\cup X_3$ be a $(g,k;p,b)$-relative trisection of a $4$-manifold $X$ with boundary.
Set $\Sigma:=X_1\cap X_2\cap X_3$, and let $\alpha$, $\beta$, and $\gamma$ be collections of simple closed curves on $\Sigma$ that bound complete disk systems of the compression bodies $X_3\cap X_1$, $X_1\cap X_2$, and $X_2\cap X_3$, respectively.
Then we call the $4$-tuple $(\Sigma;\alpha,\beta,\gamma)$ a $(g,k;p,b)$-\textit{relative trisection diagram} of $X=X_1\cup X_2\cup X_3$.
\end{definition}

For a relative trisection diagram $(\Sigma;\alpha,\beta,\gamma)$, we draw the $\alpha$-, $\beta$-, and $\gamma$-curves in red, blue, and green, respectively.

\begin{figure}[!htbp]
\centering
\includegraphics[scale=0.75]{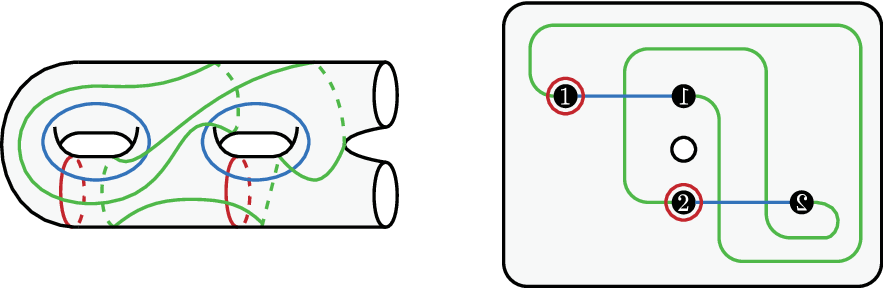}
\caption{An example of a relative trisection diagram. The diagram on the right is a planar representation of the left one.}
\label{fig:td-exam}
\end{figure}

It is known that a relative trisection can be constructed from a Lefschetz fibration on a $4$-manifold with boundary.

\begin{theorem}[Castro--Gay--Pinz\'{o}n-Caicedo~{\cite{CGPC18}}]
Let $f: X\to D^2$ be an achiral Lefschetz fibration with regular fiber $\Sigma_{p,b}$ and $n$ vanishing cycles. Then $X$ admits a $(p+n, 2p+b-1;p,b)$-relative trisection.
\end{theorem}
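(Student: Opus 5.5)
The statement is the theorem of Castro--Gay--Pinz\'{o}n-Caicedo, and I would prove it by building the relative trisection directly from the handle decomposition that $f$ induces. Recall from Subsection~\ref{subsec:Lefschetz fibrations} that
\[
X=(D^2\times\Sigma_{p,b})\cup\bigcup_{i=1}^n h_i^{(2)},
\]
where $h_i^{(2)}$ is attached along the vanishing cycle $\alpha_i$ in a fiber over a boundary point of $D^2$, with framing $\mp1$ relative to the fiber (the sign depending on whether the critical point is positive or achiral).

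\textbf{The case $n=0$.} Here $X=D^2\times\Sigma_{p,b}$, which is a $4$-dimensional $1$-handlebody of genus $2p+b-1$. Divide $D^2$ into three sectors $D_1,D_2,D_3$ meeting at the center $0$, and set $X_j=D_j\times\Sigma_{p,b}$.
\begin{itemize}
\item Each $X_j\cong D^2\times\Sigma_{p,b}$ is a $1$-handlebody of genus $k=2p+b-1$.
\item Each pairwise intersection is $[0,1]\times\Sigma_{p,b}$, the trivial compression body.
\item The triple intersection is $\{0\}\times\Sigma_{p,b}$, which has genus $p$.
\item On the boundary, $\partial D^2\times\Sigma_{p,b}$ together with the binding part recovers the trivial open book.
\end{itemize}
This gives a $(p,2p+b-1;p,b)$-relative trisection, which serves as the base case.

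\textbf{Inductive step.} I would argue by induction on $n$. The goal is to show that attaching one Lefschetz $2$-handle $h$ along a curve $c$ on a page of the induced open book on $\partial X$ turns a $(g,k;p,b)$-relative trisection into a $(g+1,k;p,b)$-relative trisection. The new trisection should induce the open book with monodromy changed by $t_c^{\pm1}$.

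The construction follows the Castro--Gay--Pinz\'{o}n-Caicedo diagrammatic move.
\begin{enumerate}
\item Stabilize the central surface $\Sigma_{g,b}$ by adding one handle, so the genus becomes $g+1$.
\item Choose the new $\alpha$, $\beta$, $\gamma$ curves so that two of the three families agree in the new handle.
\item Let the third family run once over the new handle and then follow a copy of $c$ pushed into the page region $\Sigma_{p,b}\subset\Sigma_{g,b}$ that is left fixed by the compression bodies.
\end{enumerate}
Concretely, in the monodromy algorithm of~\cite{CGPC18} the added curves are $\alpha_{g+1}=\beta_{g+1}$ meeting $\gamma_{g+1}$ once, with $\gamma_{g+1}$ band-summed with $c$. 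Then I would check three things.
\begin{itemize}
\item \emph{Sector genus.} The sectors are unchanged up to a cancelling $1$/$2$-handle pair, so each remains a $1$-handlebody of genus $k$. This uses the fact that each pair of families still forms a standard diagram of $\#^{k}(S^1\times S^2)$-type relative to the page.
\item \emph{Total space.} The $4$-manifold gains exactly a $2$-handle attached along $c$ with framing $\mp1$ relative to the page.
\item \emph{Open book.} The induced open book monodromy picks up $t_c^{\pm1}$.
\end{itemize}
After $n$ steps we obtain $g=p+n$ and $k=2p+b-1$, as claimed.

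\textbf{Main obstacle and consistency check.} The hard step is verifying that each $X_i\cap X_j$ is still a compression body from $\Sigma_{g+1,b}$ to $\Sigma_{p,b}$, and that the framing of the new $2$-handle matches the fiber framing $\mp1$. I would handle this with a local Kirby-calculus picture near the stabilized handle, slid against the page. As a final check, Proposition~\ref{prop:Euler-rel} gives
\[
\chi=(p+n)-3(2p+b-1)+3p+2b-1=2-2p-b+n,
\]
which equals $\chi(\Sigma_{p,b})+n$, as it should for the total space of a Lefschetz fibration with $n$ critical points.
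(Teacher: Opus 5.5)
\textbf{Verdict: genuine gap.} Your base case is correct: cutting $D^2$ into three sectors gives exactly the model $(p,2p+b-1;p,b)$-relative trisection of $\Sigma_{p,b}\times D^2$. Inducting on the vanishing cycles is also the right framework; the paper itself gives no argument and simply quotes \cite{CGPC18}. The problem is the local model in your inductive step.

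\textbf{The new curves are wrong.} Suppose $\alpha_{g+1}=\beta_{g+1}$ is one curve on the new handle, disjoint from all other $\alpha$- and $\beta$-curves. Then the $(\alpha,\beta)$ pair is the old pair plus a \emph{parallel} pair. So the boundary of that sector acquires an extra $S^1\times S^2$ summand, and the sector is a $1$-handlebody of genus $k+1$, contradicting your own sector-genus claim. This is not a bookkeeping slip. A relative trisection with central genus $g+1$ and sector genera $k+1,k,k$ has $\chi=(g+1)-(3k+1)+3p+2b-1=\chi(X)$, whereas $\chi(X\cup h)=\chi(X)+1$. A standard $(g+1,k;p,b)$ pair has the same number $k-2p-b+1$ of parallel pairs as a standard $(g,k;p,b)$ pair. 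Hence each of the three pairs of families must gain a \emph{dual} pair. In the local model, the three new curves should therefore pairwise meet once: a meridian, a longitude and a slope-$\pm1$ curve, the genus-one pattern of $\pm\mathbb{C}P^2$. The slope-$\pm1$ curve is an unknot in the genus-one splitting of $S^3$ cut out by the other two, with surface framing $\pm1$. Band-summing it with $c$ is exactly what makes the surface framing of the new curve equal to the page framing of $c$ shifted by $\pm1$. This is where the sign of the singularity enters, and your recipe has no mechanism for it.

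\textbf{Where $c$ sits on the central surface.} Once $g>p$, there is no subsurface of $\Sigma_{g,b}$ that is ``left fixed by the compression bodies'' and can be identified with the page. A page is recovered only as $\Sigma$ surgered along one family. A copy of $c$ pushed up through that family's compression body is disjoint from that family only. Your recipe already fails at the second step: the curve added at the first step runs along $c_1$ in the page region. So a new curve of the same family following $c_2$ meets it whenever $c_1\cap c_2\neq\emptyset$ in the fiber.

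\textbf{What the missing step requires.} You must push $c$ through the compression body of the family that receives the band sum, so that, drawn on the current $\Sigma$, it avoids that family. It may then run over earlier handles; this is how the earlier twists $t_{c_i}^{\pm1}$ and the ordering of the vanishing cycles enter. You then clear its intersections with the other two families by sliding their curves over the new dual curves. Checking that all three pairs stay standard, and that the induced monodromy changes by $t_c^{\pm1}$, is the actual content of the Castro--Gay--Pinz\'on-Caicedo construction. It is precisely the step you defer to an unspecified local Kirby-calculus picture. Your closing Euler characteristic check only confirms the numerology and cannot detect either problem.
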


Moreover, a diagram describing vanishing cycles on a fiber surface can be converted into a relative trisection diagram.
For details, see Figure~14 in~\cite{CGPC18}.

\section{The proof of main theorem}\label{sect:proofmain}
The next proposition gives a constraint on the number of singular points of a PALF.

\begin{proposition}\label{prop:LF-gkpb}
Let $X$ be a Stein filling of a contact $3$-manifold supported by a planar open book decomposition with $b$ binding components.
Suppose that $X$ admits a $(g,k;0,b)$-relative trisection. 
Then the following hold:
\begin{itemize}
\item $b-1\leq k \leq b-1+\lfloor{g/3}\rfloor$.
\item 
There exists a PALF $f:X\to D^2$ such that the regular fiber is $\Sigma_{0,b}$ and the number of singular points is $g-3k+3b-3$.
\end{itemize}
\end{proposition}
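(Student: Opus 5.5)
The plan is to compute the Euler characteristic of $X$ in two ways and compare. The first computation uses a PALF from Wendl's theorem; the second uses Proposition~\ref{prop:Euler-rel} for the given relative trisection. The count of singular points then falls out, and the bounds on $k$ come from two facts: the number of singular points is nonnegative, and it has to be compatible with the trisection parameters.

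\textbf{Step 1.} The relative trisection induces an open book on $\partial X$ with page $\Sigma_{0,b}$. By hypothesis, $X$ is a Stein filling of a contact structure supported by a planar open book with $b$ binding components, and I take the page to be exactly this $\Sigma_{0,b}$. Theorem~\ref{thm-Wendl} then gives a PALF $f:X\to D^2$ with regular fiber $\Sigma_{0,b}$ and some number $n\ge 0$ of singular points. From the handle decomposition in Section~\ref{subsec:Lefschetz fibrations}, $X$ has one $0$-handle, $b-1$ one-handles and $n$ two-handles, so
\[
\chi(X)=1-(b-1)+n=2-b+n.
\]
Proposition~\ref{prop:Euler-rel} with $p=0$ gives $\chi(X)=g-3k+2b-1$. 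Equating the two expressions yields $n=g-3k+3b-3$, which is the second item.

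\textbf{Step 2.} The general constraint on relative trisections with $p=0$ gives $b-1\le k$, which is the lower bound. For the upper bound, $n\ge 0$ gives $3k\le g+3b-3$, so $k\le b-1+g/3$. Since $k$ is an integer, $k\le b-1+\lfloor g/3\rfloor$.

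\textbf{Main obstacle.} The delicate point is the identification of the open book in Step 1. The relative trisection induces some planar open book with $b$ binding components, but Wendl's theorem is stated for the supporting planar open book in the hypothesis, and these need not coincide. I will handle this by noting that the Euler characteristic computation only uses the page $\Sigma_{0,b}$, not the monodromy. So any PALF with fiber $\Sigma_{0,b}$ given by Wendl's theorem produces the same count $n$, provided the $b$ in the hypothesis and the $b$ in the trisection are the same number, as the statement implicitly assumes. With that remark in place, the rest of the argument is arithmetic.
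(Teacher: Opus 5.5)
Your proposal is correct and follows the paper's proof essentially step for step. Both compute $\chi(X)=2-b+n$ from the Wendl PALF and $\chi(X)=g-3k+2b-1$ from Proposition~\ref{prop:Euler-rel}, and both get the lower bound on $k$ from $2p+b-1\le k$ and the upper bound from $n\ge 0$ together with integrality. Your remark about the two open books is also sound: the paper tacitly uses the same number $b$ for both, and only the page $\Sigma_{0,b}$, not the monodromy, enters the Euler characteristic count.
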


\begin{proof}
By Theorem~\ref{thm-Wendl}, there exists a PALF $f : X \to D^2$ obtained from the open book decomposition with page $\Sigma_{0,b}$.  
Let $n$ denote the number of singular points of $f$.
Then $\chi(X) = n-b+2$ holds since $X$ admits a handle decomposition consisting of one $0$-handle, $b-1$ 1-handles, and $n$ 2-handles.
On the other hand, by Proposition~\ref{prop:Euler-rel}, we also have
$\chi(X) = g-3k+2b-1$.
Comparing the two expressions, we obtain
$n = g-3k+3b-3$.
Since the number of singular points must be non-negative, it follows that $g-3k+3b-3\geq0$, which implies $k\leq b-1 + g/3$.
From the definition of a relative trisection, we also have the inequality $2p+b-1\leq k\leq g+p+b-1$.
In the case $p=0$, this reduces to $b-1\leq k\leq g+b-1$.
Combining these two bounds, we obtain that $b-1\leq k\leq b-1+\lfloor{g/3}\rfloor$.
\end{proof}

We classify the diffeomorphism types of Stein fillings for $g=0,1$.

\maingzeroone*

\begin{proof} 
By the $g=0$ case of Proposition~\ref{prop:LF-gkpb}, the 4-manifold $X$ admits a $(0,b-1;0,b)$-relative trisection, and there exists a PALF $f:X\to D^2$ such that the regular fiber is $\Sigma_{0,b}$ and the number of singular points is $0$. Since $f$ has no vanishing cycles, the total space is diffeomorphic to $\natural^{b-1}(S^1\times D^3$). 
Conversely, $\natural^{b-1}(S^1\times D^3)$ admits a $(0,b-1;0,b)$-relative trisection.

By the $g=1$ case of Proposition~\ref{prop:LF-gkpb}, the 4-manifold $X$ admits a $(1,b-1;0,b)$-relative trisection, and there exists a PALF $f:X\to D^2$ such that the regular fiber is $\Sigma_{0,b}$ and the number of singular points is $1$. Let $C_1\subset\Sigma_{0,b}$ be the vanishing cycle of $f$.

If $b=1$, then $C_1$ bounds a disk in $\Sigma_{0,b}$, so $f$ cannot be a PALF.
Therefore, we must have $b\geq2$.
The vanishing cycle $C_1$ is as shown in Figure~\ref{fig:td-g=1}.
Thus, the total space of $f$ is diffeomorphic to $\natural^{b-2}(S^1\times D^3)$ by Kirby calculus.

Conversely, $\natural^{b-2}(S^1\times D^3)$ admits a genus-$1$ relative trisection.
Castro, Gay, and Pinz\'{o}n-Caicedo~\cite{CGPC18} gave an algorithm for constructing a relative trisection diagram from an achiral Lefschetz fibration.
Applying this algorithm, we obtain the $(1,b-1;0,b)$-relative trisection diagram from the PALF $(\Sigma_{0,b-1}; C_1)$.
The resulting diagram is shown in Figure~\ref{fig:td-g=1}(right).
\end{proof}

\begin{figure}[!htbp]
\centering
\includegraphics[scale=0.75]{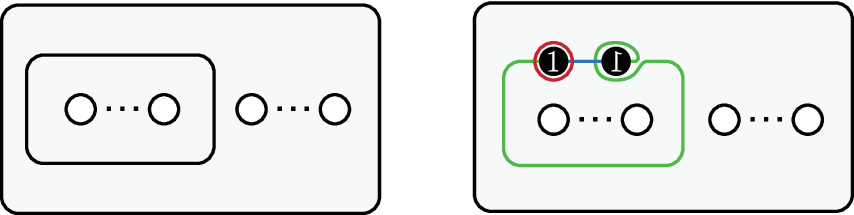}
\caption{Left: The vanishing cycle $C_1$ on $\Sigma_{0,b}$, Right: The induced $(1,b-1;0,b)$-relative trisection diagram.}
\label{fig:td-g=1}
\end{figure}

The following theorem classifies Stein fillings for $g=2$ and $b\le 4$.

\maingtwo*

We now explain the definition of $V_{p,q}$.
Fix the convention that the vertical and horizontal curves on $\Sigma_{0,4}$ have slopes $1/0$ and $0/1$, respectively (see Figure~\ref{fig:V_pq_pillowcase}~(left)). 
It is well known that isotopy classes of non-boundary-parallel simple closed curves on $\Sigma_{0,4}$ are parameterized by slopes in $\Q \cup \{1/0\}$ (see e.g.~\cite{CM09,MZ17}).
We call this characterization of simple closed curves on $\Sigma_{0,4}$ the pillowcase description.
For coprime integers $p$ and $q>0$, if $C_1$ and $C_2$ have slopes $1/0$ and $p/q$, respectively, then we define $V_{p,q}$ to be the total space of the PALF $(\Sigma_{0,4}; C_1, C_2)$, that is,
\[ V_{p,q}:=X(\Sigma_{0,4};C_1,C_2). \]
Figure~\ref{fig:V_pq_pillowcase}~(right) represents the PALF $(\Sigma_{0,4};C_1,C_2)$ corresponding to $V_{2,5}$. 

\begin{figure}[!htbp]
\centering
\includegraphics[scale=0.40]{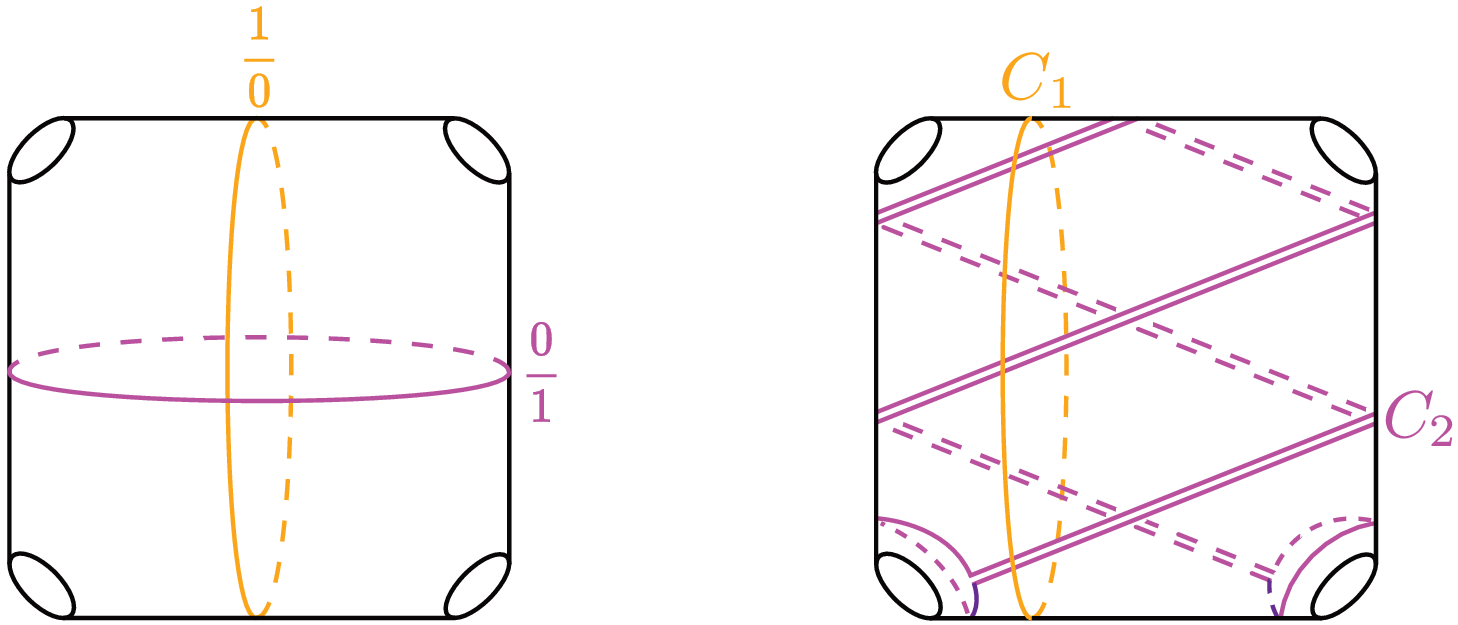}
\caption{Left: The vertical and horizontal curves on $\Sigma_{0,4}$. Right: A PALF $(\Sigma_{0,4}; C_1, C_2)$ representing $V_{2,5}$.}
\label{fig:V_pq_pillowcase}
\end{figure}

\begin{proof}[Proof of Theorem~\ref{thm:g=2}]
By the $g=2$ case of Proposition~\ref{prop:LF-gkpb}, it follows that $k=b-1$, and there exists a PALF $f:X\to D^2$ such that the regular fiber is $\Sigma_{0,b}$ and the number of singular points is $2$. Let $C_1, C_2 \subset \Sigma_{0,b}$ be the vanishing cycles of $f$.
If $b=1$, then both $C_1$ and $C_2$ are null-homologous in $\Sigma_{0,1}$, and thus $f$ cannot be a PALF. 
We must have $b\geq2$.

We consider the case $b=2$.
Since neither $C_1$ nor $C_2$ is null-homologous in $\Sigma_{0,2}$, $(\Sigma_{0,2};C_1,C_2)$ is as shown in Figure~\ref{fig:g=2b=2}~(left). 
Hence, the total space of the PALF $f$ has the Kirby diagram shown in Figure~\ref{fig:g=2b=2}~(middle), which represents $E_{-2}$.
Conversely, $E_{-2}$ admits a genus-$2$ relative trisection.
We can get a $(2,1; 0,2)$-relative trisection diagram from the PALF $(\Sigma_{0,2}; C_1, C_2)$. 
See Figure~\ref{fig:g=2b=2}~(right).
\begin{figure}[!htbp]
\centering
\includegraphics[scale=0.75]{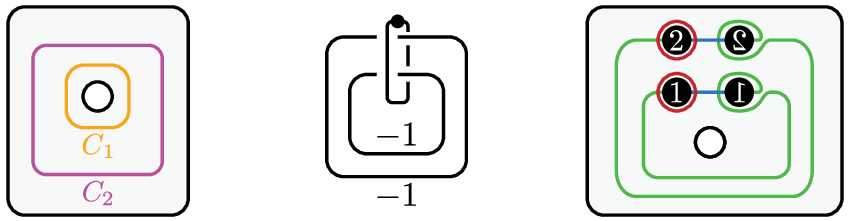}
\caption{Left: Vanishing cycles $C_1$ and $C_2$ on $\Sigma_{0,2}$, Middle: The induced handlebody diagram, Right: The induced $(2,1;0,2)$-relative trisection diagram.}
\label{fig:g=2b=2}
\end{figure}

Next, we consider the case $b=3$.
Since neither $C_1$ nor $C_2$ is null-homologous in $\Sigma_{0,3}$, the PALF $(\Sigma_{0,3};C_1,C_2)$ corresponds to one of the diagrams shown in  Figure~\ref{fig:g=2b=3a} (left) or Figure~\ref{fig:g=2b=3b}  (left).
For each case, the total space has the Kirby diagram shown in the middle, and the induced $(2,2;0,3)$-relative trisection diagram is depicted on the right.
The $4$-manifolds represented by Figure~\ref{fig:g=2b=3a} and Figure~\ref{fig:g=2b=3b} are $D^4$ and $E_{-2}\natural(S^1\times D^3)$, respectively.
\begin{figure}[!htbp]
\centering
\includegraphics[scale=0.75]{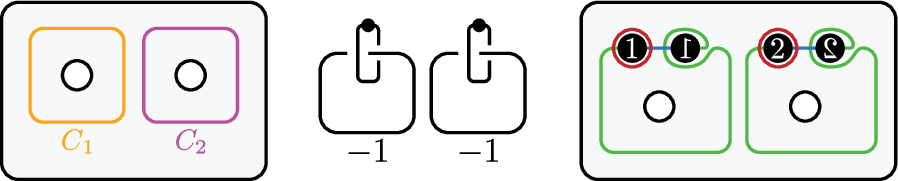}
\caption{Left: Vanishing cycles $C_1$ and $C_2$ on $\Sigma_{0,3}$, Middle: The induced handlebody diagram, Right: The induced $(2,2;0,3)$-relative trisection diagram.}
\label{fig:g=2b=3a}
\end{figure}
\begin{figure}[!htbp]
\centering
\includegraphics[scale=0.75]{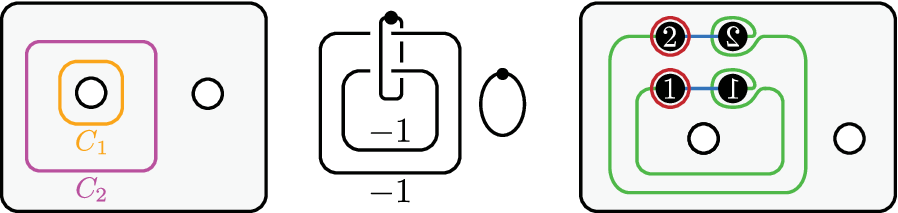}
\caption{Left: Vanishing cycles $C_1$ and $C_2$ on $\Sigma_{0,3}$, Middle: The induced handlebody diagram, Right: The induced $(2,2;0,3)$-relative trisection diagram.}
\label{fig:g=2b=3b}
\end{figure}
\begin{figure}[!htbp]
\centering
\includegraphics[scale=0.75]{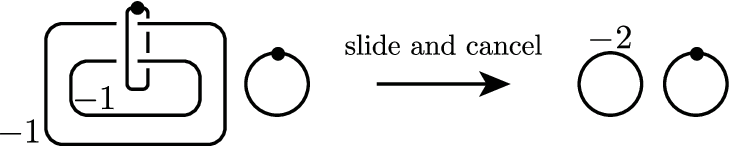}
\caption{A handle move.}
\label{fig:kd-E-2-S1xD3}
\end{figure}

Finally, assume that $b=4$. 
Up to exchanging $C_1$ and $C_2$, there are three possibilities for the pair of vanishing cycles:
\begin{enumerate}
    \item both $C_1$ and $C_2$ are boundary-parallel,
    \item $C_1$ is boundary-parallel and $C_2$ is not boundary-parallel, or
    \item neither $C_1$ nor $C_2$ is boundary-parallel.
\end{enumerate}

In case~(1), we may assume that $(\Sigma_{0,4};C_1,C_2)$ is as shown in Figures~\ref{fig:g=2b=4a1}~(left) and \ref{fig:g=2b=4a2}~(left).
The total space is diffeomorphic to either $S^1\times D^3$ or $E_{-2}\natural^2(S^1\times D^3)$.

\begin{figure}
\centering
\includegraphics[scale=0.75]{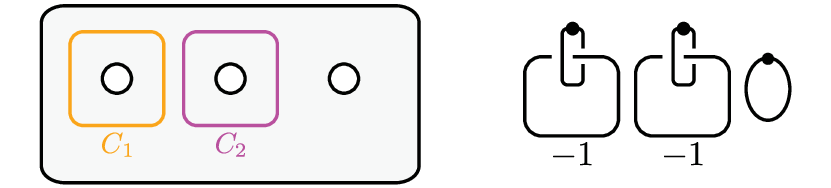}
\caption{Left: Vanishing cycles $C_1$ and $C_2$ on $\Sigma_{0,4}$, Right: The induced handlebody diagram.}
\label{fig:g=2b=4a1}
\end{figure}

\begin{figure}
\centering
\includegraphics[scale=0.75]{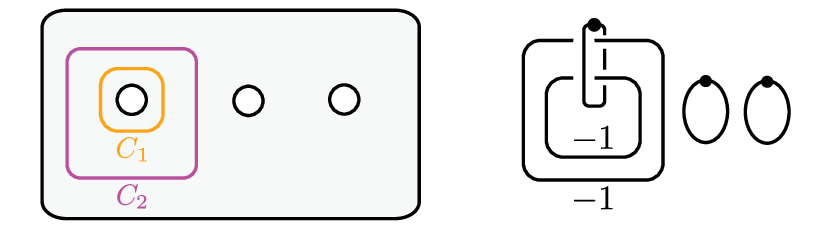}
\caption{Left: Vanishing cycles $C_1$ and $C_2$ on $\Sigma_{0,4}$, Right: The induced handlebody diagram.}
\label{fig:g=2b=4a2}
\end{figure}

In case~(2), we may assume that $(\Sigma_{0,4};C_1,C_2)$ is as shown in Figure~\ref{fig:g=2b=4b}~(left).
The total space is diffeomorphic to $D^4$.

\begin{figure}
\centering
\includegraphics[scale=0.75]{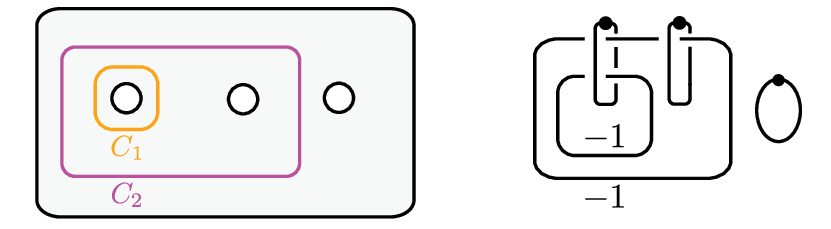}
\caption{Left: Vanishing cycles $C_1$ and $C_2$ on $\Sigma_{0,4}$, Right: The induced handlebody diagram.}
\label{fig:g=2b=4b}
\end{figure}

In case~(3), we can describe the configuration of non-boundary-parallel simple closed curves on $\Sigma_{0,4}$ by the pillowcase description, which allows us to write the vanishing cycles $C_1$ and $C_2$ as the slopes $1/0$ and $p/q$, respectively (see e.g.~\cite{CM09,MZ17}).
The corresponding PALF is shown in Figure~\ref{fig:V_pq_pillowcase}, and its total space is $V_{p,q}$.
\end{proof}

We easily get the fundamental group and the first homology group of $V_{p,q}$. 
\begin{proposition}
\label{prop:fundamental_group}
    Let $p\in\Z$ and $q\in\Z_{\ge 0}$ be coprime integers.
    Then $\pi_1(V_{p,q}) \cong G\bigl(K(p/q)\bigr)$, where $G\bigl(K(p/q)\bigr)$ is the link group of the $2$-bridge link $K(p/q)$.
    Thus, it follows that
    \[ H_1(V_{p,q}) \cong
    \begin{cases}
    \mathbb{Z} & (q : \text{ odd}),\\[4pt]
    \mathbb{Z} \oplus \mathbb{Z} & (q : \text{ even}).
    \end{cases}
    \]
    by the Hurewicz theorem applying to $\pi_1(V_{p,q})$.
\end{proposition}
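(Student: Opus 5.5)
The plan is to read off $\pi_1(V_{p,q})$ from the handle decomposition that the PALF $(\Sigma_{0,4};C_1,C_2)$ provides. The total space $X(\Sigma_{0,4};C_1,C_2)$ is $D^2\times\Sigma_{0,4}$ with two $2$-handles attached along $C_1$ and $C_2$. The piece $D^2\times\Sigma_{0,4}$ is a $4$-dimensional $1$-handlebody whose fundamental group is $\pi_1(\Sigma_{0,4})\cong F_3$, the free group of rank $3$. Each $2$-handle kills the normal closure of the class of its attaching curve in $\pi_1(\Sigma_{0,4})$; framings play no role here. Hence
\[ \pi_1(V_{p,q}) \cong \pi_1(\Sigma_{0,4})/\langle\!\langle [C_1],[C_2]\rangle\!\rangle. \]

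The main step is to identify this quotient with a $2$-bridge link group. I would use the standard picture of a $2$-bridge link $K(p/q)$ in bridge position, split by the bridge sphere into two rational tangles.
\begin{itemize}
\item The complement of a trivial tangle in a $3$-ball is a genus-$2$ handlebody, obtained from the four-punctured sphere $\Sigma_{0,4}$ times an interval by attaching a single $2$-handle along the curve separating the punctures in pairs.
\item For the trivial tangle of slope $1/0$, this curve is the vertical curve, i.e.\ $C_1$.
\item For the tangle of slope $p/q$, the curve is the curve of slope $p/q$ in the pillowcase description, i.e.\ $C_2$, up to the usual conventions on normalizing slopes.
\end{itemize}
Gluing the two tangle complements along the punctured bridge sphere and applying van Kampen gives
\[ G\bigl(K(p/q)\bigr)\cong \pi_1(S^2\setminus 4\text{ pts})/\langle\!\langle C_1,C_2\rangle\!\rangle, \]
which is exactly the group computed above. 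This yields $\pi_1(V_{p,q})\cong G(K(p/q))$.

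I expect the main obstacle to be matching slope conventions. One must check that the pillowcase slope $p/q$ of $C_2$ is really the slope of the rational tangle producing Schubert's $K(p/q)$, rather than $q/p$ or $-p/q$. Such a mismatch could change the link, although $K(p/q)$ and $K(-p/q)$ are mirror images and so have isomorphic groups. I would first try to verify this on small cases such as $1/1$, which should give the unknot group $\mathbb{Z}$, and $0/1$ or $1/2$, which should give the unlink group $F_2$ or the Hopf link group $\mathbb{Z}^2$. I would then fix the identification by the standard double-branched-cover correspondence between slopes on $\Sigma_{0,4}$ and slopes on the torus.

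Finally, for the homology, abelianize using $H_1=\pi_1^{\mathrm{ab}}$ (Hurewicz). The homology $H_1$ of a link complement in $S^3$ is free abelian of rank equal to the number of components, and $K(p/q)$ is a knot when $q$ is odd and a two-component link when $q$ is even. This gives $\mathbb{Z}$ and $\mathbb{Z}\oplus\mathbb{Z}$ respectively.
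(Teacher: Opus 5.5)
Your proposal is correct and follows the paper's approach. Van Kampen on the PALF handle decomposition gives $\pi_1(\Sigma_{0,4})/\langle\!\langle C_1,C_2\rangle\!\rangle$, and identifying this with $G(K(p/q))$ via the two trivial-tangle complements is exactly the van Kampen argument the paper cites from \cite{ORS08}; that reference's slope convention makes $q$ the determinant, so $K(p/q)$ is a knot exactly when $q$ is odd. One slip in your planned sanity checks: in this convention the $2$-component unlink is $K(1/0)$ (where $C_2=C_1$), whereas slope $0/1$ gives the unknot, as your own parity rule predicts.
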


\begin{proof}
For a PALF $(\Sigma_{0,4};\, C_{1}, C_2)$, we get the following by applying the van Kampen's theorem,
\begin{equation*}
\pi_{1}\bigl(X(\Sigma_{0,4};\,C_{1}, C_2) \bigr)
\cong
\pi_{1}(\Sigma_{0,4})\big/\bigl\langle\!\bigl\langle C_{1}, C_2\bigr\rangle\!\bigr\rangle,
\end{equation*}
where $\langle\!\langle C_{1}, C_2 \rangle\!\rangle$ denotes the normal closure of the conjugacy classes represented by the curves $C_{1}$ and $C_2$ in $\Sigma_{0,4}$.
Since $V_{p,q}=X(\Sigma_{0,4};\,C_{1},C_{2})$, we have
$\pi_{1}(V_{p,q}) \cong \pi_{1}(\Sigma_{0,4})\big/\bigl\langle\!\bigl\langle C_{1},C_{2}\bigr\rangle\!\bigr\rangle$.
On the other hand, it follows that $G\bigl(K(p/q)\bigr) \cong \pi_{1}(\Sigma_{0,4})\big/\bigl\langle\!\bigl\langle C_{1},C_{2}\bigr\rangle\!\bigr\rangle$ by the van Kampen's theorem.
This is stated in \cite{ORS08}.
Thus, we have $\pi_1(V_{p,q}) \cong G\bigl(K(p/q)\bigr)$.
\end{proof}

We determine the relative trisection genus of $V_{p,q}$.

\begin{proposition}\label{prop:reltrisection_genus}
If $q\neq1$, then the relative trisection genus of $V_{p,q}$ is $2$.
That is, $V_{p,q}$ admits no relative trisections of genus at most $1$.
\end{proposition}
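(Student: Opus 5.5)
Since $V_{p,q}$ is by definition the total space of a genus-$0$ Lefschetz fibration with fiber $\Sigma_{0,4}$ and two vanishing cycles, the theorem of Castro--Gay--Pinz\'{o}n-Caicedo gives a $(2,3;0,4)$-relative trisection. Hence $g(V_{p,q})\le 2$, and what remains is to rule out relative trisections of genus $0$ and $1$. The caveat is that such a trisection may induce an arbitrary open book on $\partial V_{p,q}$, with any page genus $p'$. So Theorem~\ref{thm:g=01}, which assumes a planar page coming from the trisection, does not directly apply. I would therefore argue with invariants that do not depend on the boundary open book: the fundamental group and the Euler characteristic.

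\textbf{Genus $0$.} A $(0,k;p',b')$-relative trisection satisfies $2p'+b'-1\le k\le p'+b'-1$, which forces $p'=0$ and $k=b'-1$. The manifold is then a union of $1$-handlebodies glued along handlebodies with trivial central surface data, and is diffeomorphic to $\natural^{k}(S^1\times D^3)$. Its fundamental group is therefore free. On the other hand, Proposition~\ref{prop:fundamental_group} gives $\pi_1(V_{p,q})\cong G(K(p/q))$. For $q\neq 1$ the $2$-bridge link $K(p/q)$ is nontrivial: a nontrivial knot when $q$ is odd, and a non-split link when $q$ is even. A nontrivial knot group is not free (it is not $\mathbb{Z}$, and $H_1=\mathbb{Z}$ rules out other free groups). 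A non-split $2$-component link group with $H_1=\mathbb{Z}^2$ is not free unless the link is trivial. This excludes genus $0$.

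\textbf{Genus $1$.} This is the harder case. I would use the general fact that a relative trisection diagram yields a handle decomposition: $X$ is built from $\Sigma_{g,b}\times D^2$-type data, with $\pi_1(X)$ a quotient of the free group $\pi_1(X_1)$ of rank $k$ by relations from the $\beta$ (or $\gamma$) curves. In genus $1$ only at most one new relation appears beyond those of the boundary handlebody, so $\pi_1(X)$ is a one-relator quotient of a free group, or free.

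Concretely, from the trisection diagram $X$ decomposes as $\natural^{k}(S^1\times D^3)$ plus $g-p'$ two-handles attached along curves. With $g=1$, $p'\le 1$ and $k\ge 2p'+b'-1$, there is at most one $2$-handle. Hence $\chi(X)=1-k+\epsilon$ with $\epsilon\in\{0,1\}$, and $\pi_1(X)$ is free of rank $k-\epsilon$ when the attaching circle is primitive or trivial in $\pi_1$, and a one-relator group in general. Then compare with $V_{p,q}$:
\[
\chi(V_{p,q})=1-3+2=0 \quad\text{and}\quad \operatorname{rank} H_1\in\{1,2\}.
\]

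\textbf{Main obstacle.} The one-relator case is the crux. A $2$-bridge link group has deficiency-one presentations with two generators and one relator, so pure group counting may not exclude it. I would try to show that a genus-$1$ relative trisection forces the single $2$-handle to be attached along a curve lying on the genus-$1$ central surface, compatible with the boundary compression bodies. The aim is to prove that this attaching circle, viewed in $\pi_1(\natural^k S^1\times D^3)$, is either trivial or a primitive element or a power $x^m$ of a basis element, so that $\pi_1(X)$ is of the form $F_r$ or $F_r*\mathbb{Z}/m$. A nontrivial $2$-bridge link group is torsion-free, not free, and freely indecomposable, so neither form can occur. Establishing this restriction on the attaching curve, by analyzing the curves on a genus-$1$ surface with boundary in the standard diagram of Castro--Gay--Pinz\'{o}n-Caicedo, is the step I expect to require the most care. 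If it fails, I would fall back on comparing the boundary $3$-manifolds directly.
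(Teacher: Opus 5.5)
\textbf{Verdict: there is a genuine gap.} Your upper bound via Castro--Gay--Pinz\'{o}n-Caicedo is fine, and so is your genus-$0$ case. One caveat there concerns the slope $1/0$: $K(1/0)$ is the split unlink, so $\pi_1(V_{1,0})\cong F_2$ \emph{is} free. Use instead that $\chi=0$ forces $b'=2$ and $k=1$, so $X\cong S^1\times D^3$.

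The gap is genus $1$, which is the entire content of the proposition. You reduce it to the claim that the single $2$-handle is attached along a word that is trivial, primitive, or a power of a basis element. This is never proved: you only say you would try, with an unspecified fallback. As you note yourself, no counting argument can substitute for it, since $2$-bridge link groups have two-generator one-relator presentations.

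Your handle model is also inaccurate, because it omits $3$-handles, which occur when $k>2p'+b'-1$. For example, a $(1,b';0,b')$-relative trisection has $\chi=-b'$ by Castro--Ozbagci, not the $2-b'$ your count gives. The $(1,1;0,1)$-relative trisection of $S^1\times S^3$ minus a ball is an instance.

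What is missing is the paper's reduction followed by a finite classification.
\begin{itemize}
\item Combining $2p'+b'-1\le k\le g+p'+b'-1$ with $\chi=g-3k+3p'+2b'-1$ and $\chi(V_{p,q})=0$ leaves only $(k,p',b')=(2,0,3)$.
\item In a $(1,2;0,3)$-diagram each of $\alpha,\beta,\gamma$ is a single curve, so no handle slides are possible, and the curves meet pairwise once.
\item Capping the three boundary circles gives the genus-$1$ diagram of $\pm\mathbb{C}P^2$ (Lemma~3.1 of \cite{T24}).
\item Placing the three removed disks in the three complementary regions of $\alpha\cup\beta\cup\gamma\subset T^2$ gives ten diagrams. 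These realize only $\pm\mathbb{C}P^2\#\natural^2(S^1\times D^3)$, $S^1\times D^3$, and $S^1\times D^3$ summed with the $D^2$-bundle over $\mathbb{R}P^2$ of Euler number $1$.
\end{itemize}

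The fundamental groups of these three manifolds are $F_2$, $\mathbb{Z}$, and $\mathbb{Z}*\mathbb{Z}/2$, exactly the shapes you predicted. Your torsion and freeness endgame therefore works for $q\ge 2$ once this list is established.

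If $q=0$ is allowed, you need one more invariant, since $\pi_1(V_{1,0})\cong F_2$ as well. The paper uses $V_{1,0}\cong E_{-2}\natural^2(S^1\times D^3)$, whose intersection form is $(-2)$ rather than $(\pm1)$.
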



\begin{proof}
First, we show that if $V_{p,q}$ admits a $(1,k;p,b)$-relative trisection, then $(k,p,b)=(2,0,3)$.
By the definition of a relative trisection, the parameters satisfy $k,p \geq 0$, $b \geq 1$, and
\begin{equation}
2p + b - 1 \leq k \leq p + b. \label{eq:ineq}
\end{equation}
In addition, since $\chi(V_{p,q})=0$, Proposition~\ref{prop:Euler-rel} implies that
\begin{equation}
-3k+3p+2b=0. \label{eq:chi}
\end{equation}
The inequality~\eqref{eq:ineq} implies $2p+b-1 \leq p+b$, which simplifies to $p\leq1$. Thus, we have $p \in \{0,1\}$.
If $p=1$, then \eqref{eq:ineq} gives $k=b+1$. 
Substituting these values into \eqref{eq:chi}, we obtain
\begin{equation*}
-3(b+1)+3\cdot1+2b = 0,
\end{equation*}
which yields $b=0$. This contradicts the condition $b \geq 1$.

Now suppose $p=0$. In this case, \eqref{eq:ineq} implies $k \in \{b-1, b\}$.
If $k=b$, then \eqref{eq:chi} gives
\begin{equation*}
-3b + 3\cdot0 + 2b = 0,
\end{equation*}
hence $b=0$, again a contradiction.
If $k=b-1$, then \eqref{eq:chi} gives
\begin{equation*}
-3(b-1) + 3\cdot0 + 2b = 0,
\end{equation*}
which leads to $b=3$.
Consequently, we conclude that $(k,p,b) = (2,0,3)$.

Next, we determine the $4$-manifolds that admit a $(1,2;0,3)$-relative trisection.
Since each curve system of a $(1,2;0,3)$-relative trisection diagram consists of a single component, we cannot perform handleslides within any curve system.
Thus, the diffeomorphism classes of $(1,2;0,3)$-relative trisections correspond to the equivalence classes of $(1,2;0,3)$-relative trisection diagrams, up to surface diffeomorphisms and isotopies of the individual curves.

Let $\mathcal{D}=(\Sigma_{1,3};\alpha,\beta,\gamma)$ be a $(1,2;0,3)$-relative trisection diagram.
By the definition of a relative trisection diagram (see~\cite{CGPC18}), each of the triples $(\Sigma_{1,3};\alpha,\beta)$, $(\Sigma_{1,3};\beta,\gamma)$, and $(\Sigma_{1,3};\gamma,\alpha)$ can be modified to the standard diagram shown in Figure~\ref{fig:1203std} by self-diffeomorphisms of $\Sigma_{1,3}$ and isotopies of the individual curves.
Note that any surface diffeomorphism preserves the intersection number of curves. 
That is, for any diffeomorphism $f:\Sigma\to\Sigma$ and any simple closed curves $c,c'\subset\Sigma$ intersecting transversely, we have $|c \cap c'| = |f(c) \cap f(c')|$.
Hence, by isotoping each curve if necessary, we can assume that each of the intersections $\alpha \cap \beta$, $\beta \cap \gamma$, and $\gamma \cap \alpha$ is a single point.
\begin{figure}[!htbp]
\centering
\includegraphics[scale=0.75]{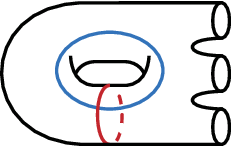}
\caption{$(1,2;0,3)$-standard diagram.}
\label{fig:1203std}
\end{figure}

Let $\widehat{\mathcal{D}}=(\Sigma_1;\alpha,\beta,\gamma)$ be the diagram obtained from $\mathcal{D}$ by capping off each of the three boundary components with a disk.
By Lemma~3.1 of \cite{T24}, $\widehat{\mathcal{D}}$ is a $(1,0)$-trisection diagram.
Up to diffeomorphisms of the underlying surface $\Sigma_1$, the capped diagram $\widehat{\mathcal{D}}$ coincides with one of the two standard $(1,0)$-trisection diagrams shown in Figures~\ref{fig:td-+cp2} and \ref{fig:td--cp2}, namely $\mathcal{D}_{+\mathbb{C} P^2}$ or $\mathcal{D}_{-\mathbb{C} P^2}$.
These are the standard $(1,0)$-trisection diagrams of $\pm \mathbb{C}P^2$.
Thus, the $(1,2;0,3)$-relative trisection diagram $\mathcal{D}$ is obtained from $\mathcal{D}_{+\mathbb{C}P^2}$ or $\mathcal{D}_{-\mathbb{C} P^2}$ by removing three disjoint open disks.
\begin{figure}[h]
\centering
\begin{minipage}[b]{0.49\columnwidth}
    \centering
    \includegraphics[scale=0.75]{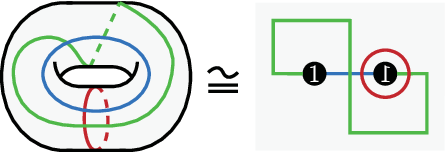}
    \caption{$\mathcal{D}_{+\mathbb{C} P^2}$}
    \label{fig:td-+cp2}
\end{minipage}
\begin{minipage}[b]{0.49\columnwidth}
    \centering
    \includegraphics[scale=0.75]{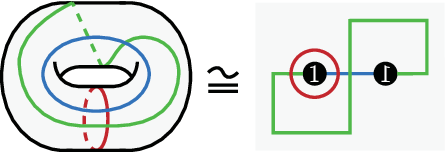}
    \caption{$\mathcal{D}_{-\mathbb{C} P^2}$}
    \label{fig:td--cp2}
\end{minipage}
\end{figure}

We now focus on the case of $\mathcal{D}_{+\mathbb{C}P^2}$.
Note that the same argument applies to the case of $\mathcal{D}_{-\mathbb{C}P^2}$.
The three simple closed curves of $\mathcal{D}_{+\mathbb{C}P^2}$ decompose the torus into three regions.
Allowing for multiple disks to be removed from the same region, we consider all possible placements of the three removed disks.
This leads to the ten cases shown in Figures~\ref{fig:td-cp2dotdot}, \ref{fig:td-dot}, and \ref{fig:td-E2dot}.
The diagrams in Figure~\ref{fig:td-cp2dotdot} represent $\mathbb{C} P^2 \# (\natural^2 (S^1 \times D^3))$.
The diagrams in Figure~\ref{fig:td-dot} represent $S^1\times D^3$.
The diagrams in Figure~\ref{fig:td-E2dot} represent the connected sum of $S^1 \times D^3$ and the $D^2$-bundle over $\mathbb{R}P^2$ with Euler number $1$.
\begin{figure}[!htbp]
\centering
\includegraphics[scale=0.75]{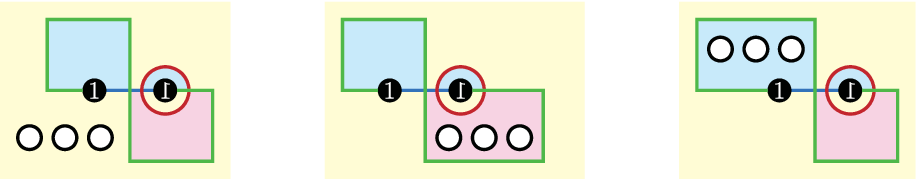}
\caption{$(1,2;0,3)$-relative trisection diagrams of $\mathbb{C} P^2 \# (\natural^2 (S^1 \times D^3))$.}
\label{fig:td-cp2dotdot}
\end{figure}
\begin{figure}[!htbp]
\centering
\includegraphics[scale=0.75]{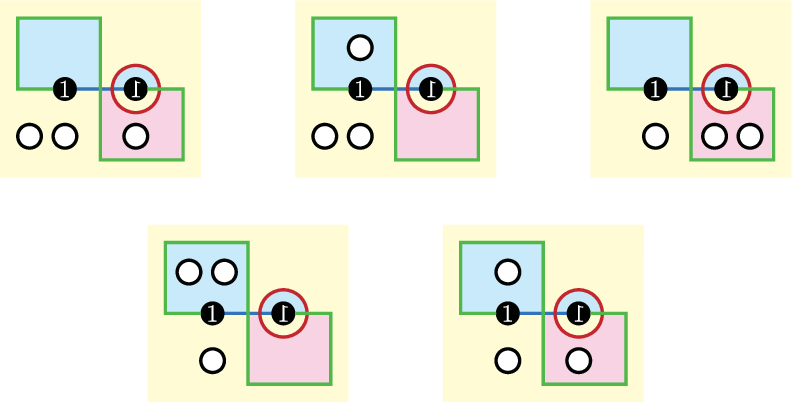}
\caption{$(1,2;0,3)$-relative trisection diagrams of $S^1\times D^3$.}
\label{fig:td-dot}
\end{figure}
\begin{figure}[!htbp]
\centering
\includegraphics[scale=0.75]{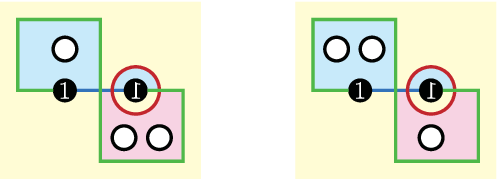}
\caption{$(1,2;0,3)$-relative trisection diagrams of the connected sum of $S^1 \times D^3$ and the $D^2$-bundle over $\mathbb{R}P^2$ with Euler number $1$.}
\label{fig:td-E2dot}
\end{figure}

Finally, we prove that if $q \neq 1$, then $V_{p,q}$ is not diffeomorphic to any of the $4$-manifolds listed above or to any of their orientation reversals.
By Proposition~\ref{prop:fundamental_group}, $H_1(V_{p,q}) \cong \mathbb{Z}$ if $q$ is odd, and $H_1(V_{p,q}) \cong \mathbb{Z} \oplus \mathbb{Z}$ if $q$ is even.
By comparing these homology groups with those of the manifolds listed above, it follows that $V_{p,q}$ is diffeomorphic neither to the connected sum of $S^1 \times D^3$ and the $D^2$-bundle over $\mathbb{R}P^2$ with Euler number $1$ nor its orientation reversal.

Suppose that $V_{p,q} \cong \pm{\mathbb{C}P^2}\#(\natural^2 (S^1 \times D^3))$.
Then we have $G({K(p/q)}) \cong \pi_1(V_{p,q}) \cong F_2$, where $F_2$ is the free group of rank $2$.
Hence, $K(p/q)$ is the 2-component unlink, which implies $(p,q) = (1,0)$.
However, this contradicts the fact that $V_{1,0} \cong E_{-2} \natural (\natural^2 (S^1 \times D^3))$.

Suppose that $V_{p,q} \cong S^1 \times D^3$.
Then we have $G({K(p/q)}) \cong \pi_1(V_{p,q}) \cong \mathbb{Z}$.
It follows that $K(p/q)$ is the unknot, which implies $q=1$. 
This contradicts the assumption $q \neq 1$.
\end{proof}

\section{Classification of $V_{p,q}$}

A natural question is to determine for which pairs $(p,q)$ and $(p',q')$ the Stein surfaces $V_{p,q}$ and $V_{p',q'}$ are diffeomorphic.
In this section, we give a complete classification of $V_{p,q}$ in the case where $q$ is odd.
Let $\Sigma$ be an oriented surface with boundary, and let $C_1, C_2\subset\Sigma$ be simple closed curves.

Let $X(\pm \Sigma; C_1, C_2)$ denote the $4$-manifold obtained from $\Sigma\times D^2$ by attaching $2$-handles along $C_1$ and $C_2$ with framing $\mp 1$ relative to the surface framing.
Observe that $-X(\pm \Sigma; C_1, C_2)$ is diffeomorphic to $X(\mp \Sigma; C_1, C_2)$.
We write the PALF presentations of $V_{p,q}$ and $V_{p',q'}$ as $X(\Sigma_{0,4};C_1,C_2)$ and $X(\Sigma_{0,4};C'_1,C'_2)$, respectively. Here $C_1$ and $C'_1$ are the vanishing cycles corresponding to the slope $1/0$, while $C_2$ and $C'_2$ are the vanishing cycles corresponding to the slopes $p/q$ and $p'/q'$, respectively.

\begin{lemma}\label{lem:conjugate}
Let $\Sigma$ be an oriented, compact, and connected surface with nonempty boundary, and let $C_1, C_2 \subset\Sigma$ be essential simple closed curves.
For any diffeomorphism $\phi:\Sigma\to\Sigma$, possibly orientation-reversing, we have 
$X(\Sigma;C_1,C_2) \cong X(\Sigma;\phi(C_1), \phi(C_2))$.
\end{lemma}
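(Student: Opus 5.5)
The plan is to extend $\phi$ to a diffeomorphism of $\Sigma\times D^2$ that carries the attaching data of one handlebody to the other, treating the two orientation cases of $\phi$ separately.

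\emph{Orientation-preserving case.} Let $\phi$ preserve orientation. Then $\Phi=\phi\times\mathrm{id}_{D^2}$ is an orientation-preserving self-diffeomorphism of $\Sigma\times D^2$. We regard each $C_i$ as lying in a fiber $\Sigma\times\{x_i\}$ with $x_i\in\partial D^2$, the points $x_1,x_2$ ordered as in the construction, so that the two handles are attached in the prescribed order. The map $\Phi$ sends $C_i\times\{x_i\}$ to $\phi(C_i)\times\{x_i\}$. Because $\phi$ is a diffeomorphism of the fiber, it carries the fiber-surface framing of $C_i$ to the fiber-surface framing of $\phi(C_i)$. Hence the framing $-1$ relative to the surface framing is carried to framing $-1$ relative to the surface framing. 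Extending $\Phi$ over the $2$-handles gives $X(\Sigma;C_1,C_2)\cong X(\Sigma;\phi(C_1),\phi(C_2))$.

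\emph{Orientation-reversing case.} Let $\phi$ reverse orientation. Then $\phi\times\mathrm{id}$ reverses the orientation of $\Sigma\times D^2$, so I would use $\Phi=\phi\times c$ instead, where $c$ is complex conjugation on $D^2$. This $\Phi$ preserves orientation. I would check two points.
\begin{itemize}
\item \emph{Order of the handles.} Conjugation reverses the cyclic order of the attaching points on $\partial D^2$. For two curves, reversing the cyclic order of two points changes nothing, so the order is harmless. If needed, one can instead place both curves in a single fiber at different collar heights, where the order is simply the radial/level order; $c$ preserves that order.
\item \emph{Framing.} The surface framing of $C_i$ is still carried to the surface framing of $\phi(C_i)$. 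One then needs the relative framing coefficient $-1$ to be preserved and not flipped to $+1$.
\end{itemize}

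The framing check is the main obstacle. The framing integer relative to a reference framing is computed as a linking number in a neighborhood $S^1\times D^2$ of the attaching circle. It is preserved by any orientation-preserving diffeomorphism of the ambient $3$-manifold $\partial(\Sigma\times D^2)$ that carries reference framing to reference framing. Since $\Phi$ is orientation-preserving on the $4$-manifold, it is orientation-preserving on its boundary. It also matches the reference (surface) framings. So the coefficient $-1$ is preserved, which is consistent with the paper's observation that $-X(\pm\Sigma;\cdot)\cong X(\mp\Sigma;\cdot)$. I would verify this carefully in local coordinates near a point of $C_i$: $\phi$ reverses one fiber direction and $c$ reverses one base direction, so a $-1$ twist of the pushoff relative to the page is mapped to a $-1$ twist.

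Gluing $\Phi$ across the $2$-handles then gives the desired orientation-preserving diffeomorphism in both cases.
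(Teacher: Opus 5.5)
Your proof is correct and is essentially the paper's argument. The orientation-preserving case is identical. In the orientation-reversing case, your single orientation-preserving map $\phi\times c$ is the composite of the paper's two steps: $\phi\times\mathrm{id}_{D^2}$, which reverses orientation and so turns the $-1$ framings into $+1$, followed by the observation $-X(-\Sigma;\cdot)\cong X(\Sigma;\cdot)$, which is realized by conjugation on the $D^2$ factor. Doing it in one step makes explicit the reversal of the cyclic order of the attaching points; the paper's ``observation'' leaves this implicit, and as you note it is harmless for two handles.
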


\begin{remark}
    If $\phi$ is an orientation-preserving diffeomorphism, then $(\Sigma;\phi(C_1),\phi(C_2))$ is a global conjugate of the PALF $(\Sigma;C_1,C_2)$.
    Lemma~\ref{lem:conjugate} also covers the case in which $\phi$ is orientation-reversing. In this case, the PALF $(\Sigma; \phi(C_1), \phi(C_2))$ is not necessarily a global conjugate of $(\Sigma; C_1, C_2)$, but its total space is still diffeomorphic to that of $(\Sigma; C_1, C_2)$.
\end{remark}

\begin{proof}[Proof of Lemma~\ref{lem:conjugate}]
Consider $\Phi:=\phi\times\mathrm{id}_{D^2}:\Sigma\times D^2\to\Sigma\times D^2$.
Its restriction to $\Sigma\times S^1 \subset \partial(\Sigma\times D^2)$ is a self-diffeomorphism and sends each attaching circle $C_i\times\{\theta_i\}$ to $\phi(C_i)\times\{\theta_i\}$.
Suppose first that $\phi$ preserves the orientation.
Recall that the total space of the PALF $(\Sigma;\phi(C_1),\phi(C_2))$ is obtained from $\Sigma\times D^2$ by attaching $2$-handles along $\phi(C_1), \phi(C_2)$ (see Section~\ref{subsec:Lefschetz fibrations}).
If $\phi$ preserves the orientation, then the $2$-handles are attached with the surface framing minus one.
Therefore $\Phi$ extends over the attached $2$-handles, yielding the desired orientation-preserving diffeomorphism between the total spaces.
If $\phi$ is an orientation-reversing diffeomorphism of $\Sigma$, then $X(\Sigma;C_1, C_2) \cong -X(-\Sigma;\phi(C_1), \phi(C_2)) \cong X(\Sigma;\phi(C_1), \phi(C_2))$.
In either case, we obtain $X(\Sigma;C_1,C_2) \cong X(\Sigma;\phi(C_1), \phi(C_2))$ as claimed.
\end{proof}
 
In the next proposition, we prove that $V_{p,q}$ is diffeomorphic to $V_{p',q'}$ when $q=q'$ and either $p \equiv \pm p' \pmod{q}$ or $pp' \equiv \pm 1 \pmod{q}$.
We recall the corresponding between slopes on $\Sigma_{0,4}$ and primitive vectors in $H_1(T^2;\mathbb{Z})$ for the proof.
Let $\iota$ be the involution on $T^2$ defined by $\iota(x)=-x$, and let
$\pi:T^2=\mathbb{R}^2/\mathbb{Z}^2 \longrightarrow T^2/\langle \iota\rangle \cong S^2$ be the quotient map.
The quotient map $\pi$ is a double branched cover over the four fixed points of the involution $\iota$, and we regard $\Sigma_{0,4}$ as the complement of these four branch points in $T^2/\langle \iota\rangle$. 
Any self-diffeomorphism of $\Sigma_{0,4}$ lifts to a homeomorphism of $T^2$, unique up to composition with the involution $\iota$.
Hence every self-diffeomorphism of $\Sigma_{0,4}$ induces an automorphism of $H_1(T^2;\mathbb Z)\cong\mathbb Z^2$.
Conversely, every diffeomorphism of $T^2$ that commutes with $\iota$ descends to a diffeomorphism of $\Sigma_{0,4}$.

An essential simple closed curve $l \subset \Sigma_{0,4}$ lifts under $\pi$ to two disjoint parallel essential curves on $T^2$, which are exchanged by $\iota$.
Each component determines a primitive vector $v\in H_1(T^2;\mathbb Z)$, since every essential simple closed curve on $T^2$ represents a primitive homology class.
Thus $l$ corresponds to a primitive vector of $H_1(T^2;\mathbb Z)$, well-defined up to overall sign.
After fixing the standard basis of $H_1(T^2;\mathbb{Z})$, we therefore identify the slope $p/q$ with the primitive vector
$
\begin{bmatrix}
p\\
q
\end{bmatrix}$,
where $p$ and $q$ are coprime integers are not both zero. In particular, the slopes $1/0$ and $0/1$ correspond to
$
\begin{bmatrix}
1\\
0
\end{bmatrix}
\ \text{and}\ 
\begin{bmatrix}
0\\
1
\end{bmatrix},
$
respectively. 
Under this identification, the action of $A\in \GL_2(\mathbb{Z})$ on slopes is given by $v \longmapsto Av$.
    
\mainVpq*

\begin{proof}
Assume that either $p\equiv \pm p'\pmod q$ or $pp'\equiv \pm 1\pmod q$.

\begin{itemize}
\item[(1)] Suppose first that $p\equiv \epsilon p' \pmod{q}$ for some $\epsilon\in\{1,-1\}$.
Then there exists $m\in\Z$ such that $p+mq=\epsilon p'$.
Set
\[
A=\begin{bmatrix}1&m\\0&\epsilon\end{bmatrix}\in GL_2(\Z).
\]
Then
\[
A\colvec{1\\0}=\colvec{1\\0} \quad \text{and} \quad
A\colvec{p\\q}=\epsilon \colvec{p'\\ q'}
\]
since $q=q'$.
The second equality means that $A$ sends the slope $p/q$ to $p'/q'$,
{while} the first fixes the slope $1/0$.

\item[(2)] Next, suppose that $pp'\equiv \epsilon \pmod{q}$ for some $\epsilon\in\{1,-1\}$.
Then there exists $m\in\Z$ such that $pp'+mq=\epsilon$.
Set
\[
A=\begin{bmatrix}p'&m\\ q&-p\end{bmatrix}\in GL_2(\Z).
\]
Then
\[
A\colvec{1\\0}=\colvec{p'\\q'} \quad \text{and} \quad
A\colvec{p\\q}=\epsilon \colvec{1\\0}
\]
since $q=q'$.
Thus $A$ sends the slope $1/0$ to $p'/q'$ and sends $p/q$ to $1/0$ on $\Sigma_{0,4}$.
\end{itemize}

In either case, there exists a self-homeomorphism
$\phi:\Sigma_{0,4}\to\Sigma_{0,4}$ such that either
\[
\phi(C_1)= C'_1\ \text{ and }\ \phi(C_2)= C'_2,
\ \text{or}\ \phi(C_1)= C'_2\ \text{ and }\ \phi(C_2)= C'_1.
\]
By Lemma~\ref{lem:conjugate},
$X(\Sigma_{0,4}; C_1,C_2) \cong X(\Sigma_{0,4}; \phi(C_1),\phi(C_2))$.
If $\phi$ exchanges the two vanishing cycles, as in case (2), then we may apply a Hurwitz move and a global conjugation to exchange their order.
This does not change the total space.
Therefore, we obtain
$V_{p,q} \cong V_{p',q}$.
\end{proof}

Even in the case when $\phi$ is orientation-reversing, Lemma~\ref{lem:conjugate} shows that the total spaces are still diffeomorphic.
We now prove that, when $q$ is odd, these congruence conditions are not only sufficient but also necessary for $V_{p,q}$ and $V_{p',q'}$ to be diffeomorphic.

\mainclass*

\begin{proof}
If $V_{p,q}$ is diffeomorphic to $V_{p',q'}$, then $\pi_1(V_{p,q})$ is isomorphic to $\pi_1(V_{p',q'})$.
By Proposition~\ref{prop:fundamental_group}, we obtain $G\bigl(K(p/q)\bigr)\cong G\bigl(K(p'/q')\bigr)$.
Since $q$ and $q'$ are odd, both $K(p/q)$ and $K(p'/q')$ are $2$-bridge knots and hence prime.
Therefore, the exteriors of $K(p/q)$ and $K(p'/q')$ are homeomorphic \cite{GL89}. 
By Schubert's classification of $2$-bridge knots, this is equivalent to the desired conditions. 
The converse follows immediately from Proposition~\ref{prop:homeo_on_fiber}.
\end{proof}

\section*{Acknowledgements}
The authors are grateful to thank Masaharu Ishikawa for his helpful comments. 
The second author was partially supported by JSPS KAKENHI Grant Number 24KJ1561.

\end{document}